\documentclass[12pt]{amsart}
\usepackage{amsmath,amsfonts,amssymb,latexsym}
\usepackage{hyperref}
\usepackage{graphicx}
\usepackage{comment}
\usepackage[a4paper, left=3cm, right=3cm, top=3cm, bottom=3cm]{geometry}
\usepackage[utf8]{inputenc}

\newcommand{\NN}{\mathbb{N}}
\newcommand{\CC}{\mathbb{C}}
\newcommand{\RR}{\mathbb{R}}
\newcommand{\QQ}{\mathbb{Q}}
\newcommand{\ZZ}{\mathbb{Z}}
\newcommand{\EE}{\mathbb{E}}
\newcommand{\Oo}{\mathcal{O}}
\newcommand{\Bo}{\mathcal{B}}

\newtheorem{Thm}{Theorem}
\newtheorem{Lem}{Lemma}
\newtheorem{Prop}{Proposition}

\theoremstyle{definition}
\newtheorem{Rem}{Remark}
\newtheorem{Def}{Definition}

\begin{document}
\subjclass[2020]{34M60, 34M25, 35E15}
\keywords{singular perturbations, Borel transform, summability, multisummability, moment differential equations, formal power series solutions}
\title[Singular perturbations problems in moment differential equations]{Summability of formal solutions of some singular perturbations problems in differential and moment differential equations}
\author{Maria Ksi\c{a}\.zkiewicz}
\address{{Faculty of Mathematics and Natural Sciences,
College of Science,
Cardinal Stefan Wyszy{\'n}ski University,
W\'oycickiego 1/3,
01-938 Warszawa, Poland}\newline
ORCiD: 0009-0006-1147-2064}
\email{m.ksiazkiewicz@uksw.edu.pl}
\author{S{\l}awomir Michalik}
\address{{Faculty of Mathematics and Natural Sciences,
College of Science,
Cardinal Stefan Wyszy{\'n}ski University,
W\'oycickiego 1/3,
01-938 Warszawa, Poland}\newline
{\& Shibaura Institute of Technology,
Department of Engineering and Design,
Saitama 337-8570, Japan}\newline
ORCiD: 0000-0003-4045-9548}
\email{s.michalik@uksw.edu.pl}

\begin{abstract}
In this paper we study the summability of solutions of some general forms of singularly perturbed linear ordinary differential and moment differential equations. We conclude that under some assumptions solutions of these equations are summable. The type of this summability depends on the specific equation. We also show the connection between some singularly perturbed moment ordinary differential equations and some linear moment partial differential equations. 
We apply this connection to describe summable and multisummable formal solutions of these singularly perturbed moment ordinary differential equations.

Main techniques used to show these conclusions are based on Borel transforms, properties of solutions of moment partial differential equations and on the Cauchy integral formula together with integral representations of solutions of such equations.
\end{abstract}
\maketitle
\section{Introduction}
We can consider singular perturbations problems for both ordinary differential equations and their moment-differential generalisations. Many
authors write papers on this subject and consider different specific issues.

In some articles there are considered Gevrey solutions or Gevrey asymptotics, see for instance  M.~Canalis-Durand, J.-P.~Ramis, R.~Schäfke and Y.~Sibuya \cite{ramis} or Y.~Sibuya \cite{sibuya}.
Singular perturbations problems can also be investigated from the point of view of asymptotic integration, as in V. Sobchuk, I. Zelenska and V. Bobochko \cite{zelenska}. 
Some authors considered also systems of equations, for example Y. Sibuya \cite{sibuya1,sibuya2}. 

On the other hand, there also exist works on ordinary and partial moment differential equations, such as A. Lastra \cite{la3} and S. Michalik \cite{mi1,mi2}.

In this paper we consider summable and multisummable solutions of singular perturbations problems for some differential and moment differential equations. 
Similar problems of summability for some systems of differential equations have been investigated for example by by W. Balser and V.~Kostov \cite{bako}, by W. Balser and J. Mozo-Fernandez \cite{bamo} and by
M. Canalis-Durand, J. Mozo-Fernández and R. Schäfke \cite{canalis}.
 Moreover, the first attempt of study of multisummable solutions of singular perturbations problems for moment differential equations has been given in A. Lastra, S. Michalik and M. Suwińska \cite{lamisu}.
 
In the paper we get similar results for other classes of differential and moment differential equations.

Our starting point is the singular perturbation problem
\begin{equation}
 \label{eq:starting_point}
 \left(\varepsilon\frac{d}{dz}-a\right)x(\varepsilon,z)=f(z),
\end{equation}
which has been described in detail by W.~Balser \cite[Section 13.3]{ba1} and W.~Balser and J. Mozo-Fernández \cite[Introduction]{bamo}. They showed that the formal solution $\hat{x}(\varepsilon,z)=\sum_{n=0}^{\infty}x_n(z)\varepsilon^n$ of (\ref{eq:starting_point}) is $1$-summable in a given direction $d$ if and only if the inhomogeneity $f(z)$ is holomorphic in a complex neighbourhood of the origin, $f(z)$ admits holomorphic continuation into some sector $S_{d-\arg a}$ in the direction $d-\arg a$ and its continuation is of exponential growth at most $1$ at the infinity.

In the first part of the paper we get the similar characterisations (see Theorems \ref{th1} and \ref{thm:2}) of summable solutions $\hat{x}(\varepsilon,z)$ in terms of the inhomogeneity $f(z)$ for the following more general singular perturbations problems for differential and moment differential equations
\begin{equation*}
 P\left(\varepsilon^k \frac{d}{dz}\right)x(\varepsilon, z)=f(z)\qquad\text{and}\qquad P\left(\varepsilon^k \partial_{m,z}\right)x(\varepsilon, z)=f(z),
\end{equation*}
where $k\in\NN$, $P$ is a polynomial with complex coefficients such that $P(0)\neq 0$ and $m=(m(n))_{n\geq 0}$ is a regular sequence of moments. 

In the second part of the paper we discuss the similarity between the nature of the characterisation of $1$-summable formal solutions of the singular perturbation problem (\ref{eq:starting_point}) in terms of the inhomogeneity and the characterisation of $1$-summable formal solutions of the homogeneous heat equations in terms of the initial value, which has been already observed by W. Balser in \cite[Sections 13 and 13.4]{ba1}.
This connection is described in Theorems \ref{thm:3} and \ref{thm:3m},
which say that $\hat{x}(\varepsilon,z)$ is a solution of th singular perturbation problem for differential or moment differential equations
\begin{equation*}
    P\left(\varepsilon,z, \frac{d}{dz}\right) \hat{x}(\varepsilon,z) = \hat{f}(\varepsilon,z)\qquad\text{or}\qquad P\left(\varepsilon,z, \partial_{m,z}\right) \hat{x}(\varepsilon,z) = \hat{f}(\varepsilon,z)
\end{equation*}
if and only if $1$-Borel transform of $\hat{x}(\varepsilon,z)$ is a solution of the Cauchy problem for the appropriate partial or moment partial differential equation in two variables. 

Next, we use the above connection 
to develop the theory of formal solutions of singular perturbations problems for moments equations
\begin{equation}\label{eq:eq}
 P(\varepsilon,\partial_{m,z})x(\varepsilon,z)=f(\varepsilon,z),
\end{equation}
in the similar spirit to the theory of formal solutions of moment partial differential equations given in \cite{mi1,mi2,mi3}. In particular we decompose the formal solution $\hat{x}(\varepsilon,z)$ of (\ref{eq:eq}) on the sum of formal power series connected with the appropriate pseudodifferential operators (see Theorem \ref{thm:4}). We also apply this decomposition to get the sufficient conditions for summability and multisummability of the formal solution $\hat{x}(\varepsilon,z)$ of (\ref{eq:eq}) given in terms of the inhomogeneity $f(\varepsilon,z)$ (see Theorem \ref{thm:6}).

Main methods used to obtain described results
are given by Borel transforms, by Balser's theory of general moment summability \cite[Section 6.5]{ba1},  by properties of solutions of moment partial differential equations and by the Cauchy integral formula together with integral representations of solutions of such equations. We also use the theory of difference equations with constant coefficients.

This paper has the following structure. In the second section we introduce some basic definitions essential to understand this paper. Then we make a proof of a theorem on summability of some general form of singularly perturbed differential equations with a standard derivative. In the next section we make generalisation on the previous theorem for moment differential equations. In the next part of this article we discuss the relation of singularly perturbed differential equations to moment partial differential equations in two variables. In the next section we introduce moment pseudodifferential operators and formal solutions of singularly perturbed moment equations. In the last part of work we consider multisummable solutions of singularly perturbed equations.

\section{Preliminaries}
In this paper we will use standard notations: $\CC$ for a set of complex numbers, $\RR$ for a set of real numbers and $\NN$ for a set of natural numbers. We will also use the symbol $\NN_{0} = \NN \cup \{0\} = \{0, 1, 2, \dots \}$.  

Let $\EE$ be a Banach space with a norm $||\cdot||_{\EE}$. We will denote by $\mathbb{E}[[\varepsilon]]$ a space of all formal power series with coefficients from $\EE$: $$\mathbb{E}[[\varepsilon]] = \left\{\hat{x} :\ \hat{x}(\varepsilon) = \displaystyle \sum _{n=0} ^{\infty} x_{n} \varepsilon^{n},\ x_{n} \in \EE\ \textrm{ for every }n \in \NN_{0}\right\}.$$

Let $r \in \RR_{+}$. We denote by $D_{r}=\{z\in\CC: |z|<r\}$ an open disc with radius $r$. If its radius $r$ is not essential, we denote it shortly by $D$. We will use the symbol $\mathcal{O}(G,\EE)$ to denote a set of all $\EE$-valued holomorphic functions on the open set $G\subseteq \CC$. We also write $\Oo(G)$ if $\EE=\CC$ for simplicity.
Analogously, for given $\kappa\in\NN$, 
the set of all holomorphic functions of the variable $z^{1/\kappa}$
on $G$ (i.e. $1/\kappa$-holomorphic functions on $G$) is denoted by $\mathcal{O}_{1/\kappa}(G)$.

In this paper as a Banach space $\EE$ we will take the space $\CC$ of complex numbers with the standard norm $\|z\|_{\CC}=|z|$, or the space $\Oo$ of all holomorphic functions on $D_r$ and continuous on its closure $\overline{D}_r$ for some $r>0$, and equipped with a norm
\begin{equation}\label{eq:norm}
 \|\varphi\|_{\Oo}:=\sup_{|z|\leq r}|\varphi(z)|.   
\end{equation}
For given $\kappa\in\NN$ we will also use a more general Banach space $\Oo_{1/\kappa}$ of space of all $1/\kappa$-holomorphic functions on $D_r$, continuous on its closure $\overline{D}_r$ for fixed $r>0$, and equipped with the same norm (\ref{eq:norm}).


\begin{Def}
    Let $s \in \RR$ be fixed. We say that a formal power series $$\hat{x}(\varepsilon) = \displaystyle \sum _{n=0} ^{\infty} x_{n} \varepsilon^{n},\ \textrm{  where }\ x_{n} \in \EE\  \textrm{ for every }\ n \in \NN_{0}$$ has a \emph{Gevrey order} $s$ if there exist nonnegative constants $A,B < \infty$ such that for every $n \in \NN_{0}$ holds estimation: $$||x_{n}||_{\EE} \leq AB^{n}n!^{s}.$$ We will denote the space of all power series of Gevrey order $s$ by $\EE[[\varepsilon]]_{s}$.
\end{Def}

\begin{Def}A set $S(d, \alpha, R) \subset \mathbb{C}$ defined by $$S(d, \alpha, R):= \left\{t = re^{i \phi} \in \mathbb{C} : r \in (0, R), \phi \in \left(d-\frac{\alpha}{2}, d+\frac{\alpha}{2}\right)\right\}$$
is called a 
    \textit{sector in the direction $d \in \RR$ with opening $\alpha$ and radius $R \in (0,+\infty]$}.
    
    If a sector has an infinity radius, we use a symbol $S(d,\alpha)$ or $S_d$ if the opening $\alpha$ is not important
    By $\hat{S}_d$ we will denote a sum of a sector in the direction $d$ and a disc $D$, i.e. $\hat{S}_d=S_d\cup D$.
    We will also use the notation $\hat{S}_d(\alpha,r)$ to denote the set $S(d,\alpha)\cup D_r$.
\end{Def}

\begin{Def}
    We say that a function $f \in \mathcal{O}(\hat{S}_d,\EE)$ has \textit{exponential growth at most} $k\in\RR$ on a disc-sector $\hat{S}_d=\hat{S}_d(\alpha,r)$ if for every $\alpha'\in(0,\alpha)$ and $r'\in(0,r)$ there exist some constants $C,a < \infty$ for which the following estimate holds:
    \begin{equation*}
    \|f(z)\|_{\EE} \leq Ce^{a|z|^{k}}\quad \text{for all}\quad z \in \hat{S}_d(\alpha',r'). 
    \end{equation*}
    We denote the set of all such functions by $\mathcal{O}^{k}(\hat{S}_d,\EE)$.

    Analogously, we say that a function $f \in \mathcal{O}(\hat{S}_{d_1}\times\hat{S}_{d_2}))$ has \textit{exponential growth at most} $k_1,k_2\in\RR$ on a product of disc-sectors $\hat{S}_{d_1}=\hat{S}_{d_1}(\alpha_1,r_1)$ and $\hat{S}_{d_2}=\hat{S}_{d_2}(\alpha_2,r_2)$ if for every $\alpha_i'\in(0,\alpha_i)$ and $r_i'\in(0,r_i)$, $i=1,2$, there exist some constants $C,a_1,a_2 < \infty$ for which the following estimate holds:
    \begin{equation*}
    |f(\varepsilon,z)| \leq Ce^{a_1|\varepsilon|^{k_1}}e^{a_2|z|^{k_2}}\quad \text{for all}\quad (\varepsilon,z) \in \hat{S}_{d_1}(\alpha_1',r_1')\times\hat{S}_{d_2}(\alpha_2',r_2') . 
    \end{equation*}
    We denote the set of all such functions by $\mathcal{O}^{k_1,k_2}(\hat{S}_{d_1}\times\hat{S}_{d_2})$.
\end{Def}

\begin{Def}
    Let $\hat{x}(\varepsilon) = \displaystyle \sum _{n=0} ^{\infty} x_{n} \varepsilon^{n} \in \EE [[\varepsilon]]$ and $k > 0$. An operator $$(\hat{\mathcal{B}}_{k}\hat{x})(\varepsilon) = \displaystyle \sum _{n=0} ^{\infty} \frac{x_{n}\varepsilon^{n}}{\Gamma(1 + \frac{n}{k})}$$ is called \textit{a formal Borel operator with index} $k$.
\end{Def}

\begin{Def} \label{df:borel}
    A series $\hat{x}\in\EE[[\varepsilon]]$ is called $\textit{$k$-summable}$ in a direction $d$ if its formal Borel transform $\hat{\mathcal{B}}_{k}\hat{x}(\varepsilon)$ is convergent and its sum is holomorphic in a sector $\hat{S}_d$ with exponential growth at most $k$ as $\varepsilon\to\infty$. 
\end{Def}

\begin{Def}
    Let $d=(d_{1}, d_{2}.\dots, d_{n})\in \RR^{n}$ be a real vector and $k_{1}>k_{2}> \dots > k_{n}>0$. We call it an $\textit{admissible multidirection}$ with respect to $(k_{1}, k_{2}, \dots, k_{n})$ if $|d_{j}-d_{j-1}| \leq \frac{\pi}{2} \left(\frac{1}{k_{j}}-\frac{1}{k_{j-1}}\right)$ for $j=2,3,\dots, n$.
\end{Def}

\begin{Def}
    A series $\hat{x}(\varepsilon)$ is called $(k_{1}, \dots, k_{n})$-\textit{multisummable} in an admissible multidirection $d=(d_{1}, d_{2}, \dots, d_{n})\in \RR^{n}$ if $\hat{x}(\varepsilon) = \hat{x}_{1}(\varepsilon) + \dots + \hat{x}_{n}(\varepsilon)$ and for $i =1, \dots, n$ the series $\hat{x}_{i}(\varepsilon)$ is $k_{i}$-summable in the direction $d_{i}$. 
\end{Def}

\begin{Def}[see {\cite[pages 85--86, 89]{ba1}}]
    We call a pair of complex-valued functions $E(t), e(t)$ \textit{kernel functions of order $k>\frac{1}{2}$}  if:

    \begin{itemize}
        \item the function $e(t)$ is holomorphic in 
        $S(0, \frac{\pi}{k})$ and $t^{-1}e(t)$ is integrable at the origin, so for any $x_{0} > 0$ and $2k|\tau| < \pi$ there exists an integral $ \displaystyle \int _{0} ^{x_{0}} x^{-1} |e (ex^{i\tau})|dx$, for each $\varepsilon > 0$ there exist constants $c,K>0$ such that $|e(t)| \leq c \cdot \ \exp [-\left(\frac{t}{K}\right)^{k}]$ and $2k |\arg \ t| \leq \pi - \varepsilon$,
        \item if $x \in \mathbb{R}_{+}$, then $e(x) \in \mathbb{R}_{+}$,
        \item the function $E(t)$ is entire, 
        and has exponential growth at most $k$, it means that for some constants $M, N > 0$ we have estimation $|E(t)| \leq M e ^{N|t|^k}$ and in $S(\pi, \pi (2-\frac{1}{k}))$ the function $t^{-1}E\left(\frac{1}{t}\right)$ is integrable at the origin for any $t \in \CC$,
        \item the functions $e(t)$ i $E(t)$ are connected by \textit{a moment function} of order $\frac{1}{k}$: \\ $m(u) = \displaystyle \int ^{\infty} _0 x^{u-1} e(x) dx, \ Re \ u \geq 0$.\\ We assume that $m(0)=1.$\\ Using the moment function we can rewrite $E(t)$ in a form of power series: \\ $E(t) = \displaystyle \sum _{n=0} ^{+\infty} \frac{t^n}{m(n)}$.
    \end{itemize}
    Kernel functions can be defined also for $k \leq \frac{1}{2}$.
    Assume that there exists $p\in\mathbb{N}$ such that $pk>\frac{1}{2}$.
Then the functions $e,E$ are \emph{kernel functions of order $k$}, if there exist kernel functions $\tilde{e},\tilde{E}$ of order $pk$, such that
$e(t)=\displaystyle\frac{\tilde{e}(t^{1/p})}{p}$. Then we obtain:
$$E(t)=\displaystyle\sum_{n=0}^{\infty}\frac{t^n}{m(n)}=\displaystyle\sum_{n=0}^{\infty}\frac{t^n}
{\tilde{m}(pn)},$$ where $m(u),\tilde{m}(u)$ are moment functions connected respectively with pairs $e,E$ and $\tilde{e},\tilde{E}$ and there holds an equality
$m(u)=\tilde{m}(pu)$.\\
Every moment function of order $\frac{1}{k}$ determines a sequence of numbers $m=(m(n))_{n \in \NN_0}$ which is called a \emph{sequence of moments of order $\frac{1}{k}$}.
\end{Def}

\begin{Rem}
If $m=(m(n))_{n \in \NN_0}$ is a sequence of moments of order $\frac{1}{k}$, then there exist constants $a,A > 0$ that for every $n \in \mathbb{N}_0$ there holds an inequality
$$a^n n!^\frac{1}{k}\le m(n)\le A^n  n!^\frac{1}{k}.$$
\end{Rem}

\begin{Def}[{see \cite[Definition 2.4]{regular}}]
    A sequence of moments $(m(n))_{n \in \NN_0}$ of order $\frac{1}{k}$ is called  \textit{regular} if for every $n\in \NN_0$ there holds an estimation:
    $$ \ c n^{\frac{1}{k}} \leq \frac{m(n)}{m(n-1)}\leq C n^{\frac{1}{k}}, $$ where $c,C \in \mathbb{R}_{+}$ are constants.
\end{Def}

\begin{Def}[{see \cite[page 421]{bayo}}]
    Suppose $m = (m(n))_{n\in\NN_0}$ is a  fixed sequence of positive numbers such that $m(0)=1$.
    An operator $$\partial _{m,z} \left(\displaystyle \sum _{n=0} ^{\infty} \frac{a_{n}}{m(n)}z^{n}\right) = \displaystyle \sum _{n=0} ^{\infty} \frac{a_{n+1}}{m(n)}z^{n}$$
    is called a 
    $\textit{moment differential operator}$.
\end{Def}

\begin{Def}
    Let $m$ be a moment function of order $\frac{1}{k}$. An operator
    $$\hat{\mathcal{B}}_{m,z} \left(\displaystyle \sum _{n=0} ^{\infty} x_{n}z^{n}\right) = \displaystyle\sum^{\infty}_{n=0} \frac{x_n}{m(n)}z^{n}$$
    is called the
    \textit{formal $m$-Borel operator of order $k$}.
\end{Def}

\begin{Rem}
    For $m=(\Gamma(1+\frac{n}{k}))_{n\in\NN_0}$ we get a formal Borel operator $\hat{\Bo}_{k}$ with index~$k$.
\end{Rem}

\begin{Rem}
Let $m$ be a moment function of order $\frac{1}{k}$.
    By the general theory of moment summability (see  \cite[Section 6.5]{ba1}), we can replace in Definition \ref{df:borel} Borel transform $\hat{\mathcal{B}}_{k}$ by $m$-Borel transform $\hat{\mathcal{B}}_{m,z}$.
\end{Rem}


\section{Summability of some form of singularly perturbed differential equations}
In this chapter we present two lemmas and the theorem on the summability of singularly perturbed differential equations with a standard derivative.
\begin{Lem}\label{le:1}
    Suppose $\hat{x}(\varepsilon, z) \in \mathcal{O}[[\varepsilon]]$ is $k$-summable in the direction $d$ and $Q(\zeta)$ is a polynomial with complex coefficients. Then $Q(\varepsilon ^{k} \frac{d}{dz})  \hat{x}(\varepsilon, z)$ is also $k$-summable in the direction $d$.
\end{Lem}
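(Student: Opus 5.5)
By linearity, and since $k$-summable series in a fixed direction form a vector space, it suffices to treat the monomials $Q(\zeta)=\zeta^j$. The operator $\left(\varepsilon^k\frac{d}{dz}\right)^j=\varepsilon^{jk}\frac{d^j}{dz^j}$ factors as differentiation in $z$ followed by multiplication by $\varepsilon^{jk}$. Induction on $j$ then reduces the lemma to two facts. First, if $\hat{x}$ is $k$-summable in the direction $d$, then so is $\frac{d}{dz}\hat{x}$. Second, if $\hat{y}$ is $k$-summable in the direction $d$, then so is $\varepsilon^k\hat{y}$. (For $j=0$ the statement is trivial.)

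For differentiation in $z$ the key observation is that $\hat{\mathcal{B}}_k$ acts only on $\varepsilon$, so it commutes with $\frac{d}{dz}$. Let $v(\varepsilon,z)=\hat{\mathcal{B}}_k\hat{x}$. By assumption it is holomorphic on $\hat{S}_d\times D_r$, continuous up to $|z|\le r$, and satisfies $\sup_{|z|\le r}|v(\varepsilon,z)|\le Ce^{a|\varepsilon|^k}$ on smaller disc-sectors. I apply the Cauchy integral formula on the circle $|w|=r$ to get the estimate
\[
\sup_{|z|\le r'}|\partial_z v(\varepsilon,z)|\le \frac{C r}{(r-r')^2}\,e^{a|\varepsilon|^k}
\]
for any $r'<r$. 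Hence $\partial_z v=\hat{\mathcal{B}}_k(\partial_z\hat{x})$ lies in $\mathcal{O}^k(\hat{S}_d,\Oo)$ with the Banach space $\Oo$ taken on the smaller disc $D_{r'}$. This shrinking of the disc is harmless, because the statement only concerns $\mathcal{O}[[\varepsilon]]$ with some radius. I also need that the coefficients of $\partial_z\hat{x}$ lie in that space and that the Borel series still converges near $\varepsilon=0$. Both follow from the same Cauchy estimate applied coefficientwise.

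For multiplication by $\varepsilon^k$ I work at the level of Borel transforms. Write $\hat{y}=\sum y_n\varepsilon^n$ and $v=\hat{\mathcal{B}}_k\hat{y}$. Then
\[
\hat{\mathcal{B}}_k(\varepsilon^k\hat{y})(\varepsilon)=\sum_n \frac{y_n\varepsilon^{n+k}}{\Gamma(2+\frac{n}{k})}.
\]
To avoid dealing with a convolution directly, I use the variable $t=\varepsilon^k$, which maps the Borel index $k$ to index $1$. On $\Gamma(1+\frac{n}{k})$ this substitution becomes the Riemann--Liouville integral
\[
\hat{\mathcal{B}}_k(\varepsilon^k\hat{y})(\varepsilon)=k\int_0^{\varepsilon}\left(1-\frac{s^k}{\varepsilon^k}\right)^{0}\cdots
\]
More cleanly, I verify termwise the identity
\[
\frac{\varepsilon^{n+k}}{\Gamma(2+n/k)}=\frac{1}{\Gamma(1)}\int_0^{\varepsilon^k}\frac{u^{n/k}}{\Gamma(1+n/k)}\,du,
\]
which gives $\hat{\mathcal{B}}_k(\varepsilon^k\hat{y})(\varepsilon)=\int_0^{\varepsilon^k}v(u^{1/k},z)\,du$. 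The path is taken along the ray; as $u$ runs from $0$ to $\varepsilon^k$, the point $u^{1/k}$ runs along the segment $[0,\varepsilon]\subset\hat{S}_d$. This integral is holomorphic on $\hat{S}_d$ and bounded by $|\varepsilon|^kCe^{a|\varepsilon|^k}$, which is again exponential growth of order $k$ after slightly increasing $a$. Convergence near the origin is immediate, since the coefficients are divided by larger Gamma values.

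The main obstacle is making this integral representation rigorous. One must justify the termwise interchange of sum and integral near the origin, and then the analytic continuation along the segment, using the fact that $v$ is holomorphic in the whole disc-sector. The $z$-derivative step is routine by comparison. Combining both steps and summing over the monomials of $Q$ proves the lemma.
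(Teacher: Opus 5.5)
Your proof is correct, but it takes a different route from the paper. The paper reduces to monomials in one line and then simply cites the fact that $k$-summable series form a differential algebra, stable under products and under $\frac{d}{dz}$.

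You instead verify the two closure properties you actually need:
\begin{itemize}
\item Stability under $\partial_z$: a Cauchy estimate on the Borel transform $v$.
\item Stability under multiplication by $\varepsilon^k$: the representation $\hat{\mathcal{B}}_k(\varepsilon^k\hat{y})(\varepsilon)=\varepsilon^k\int_0^1 v(t^{1/k}\varepsilon,z)\,dt=k\int_0^{\varepsilon}s^{k-1}v(s,z)\,ds$. Your termwise identity checks out, and the formula continues $v$ to all of $\hat{S}_d$ with the right growth because every disc-sector $\hat{S}_d(\alpha',r')$ is star-shaped with respect to the origin.
\end{itemize}

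Your route buys three things:
\begin{itemize}
\item It is self-contained.
\item It needs only multiplication by the single convergent monomial $\varepsilon^k$, not general products, which would require convolution of Borel transforms.
\item It makes explicit a point the citation hides. The operator $\frac{d}{dz}$ does not map the space $\Oo$ on a fixed closed disc $\overline{D}_r$ into itself, since derivatives of functions continuous on $\overline{D}_r$ need not be. So the conclusion genuinely lives in $\Oo$ on a smaller disc $D_{r'}$, and you handle this correctly, including the coefficientwise estimate that keeps the Borel series convergent near $\varepsilon=0$.
\end{itemize}

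The paper's route buys brevity and a statement valid for arbitrary products of summable series.

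One cosmetic point: delete the abandoned formula containing $\left(1-\frac{s^k}{\varepsilon^k}\right)^{0}\cdots$. The clean termwise identity you give right after it is all you need, and writing it as $k\int_0^{\varepsilon}s^{k-1}v(s,z)\,ds$ avoids any branch discussion for $u^{1/k}$.
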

\begin{proof}
    We can 
    write a polynomial $Q(\zeta)$ in the form $Q(\zeta)= a_{n}\zeta ^{n} + a_{n-1} \zeta ^{n-1} + \dots +a_{1} \zeta +a_{0}$. The rest of the proof of this lemma results from the fact that the space of $k$-summable formal power series forms an differential algebra, so it is stable under multiplication and derivation $\frac{d}{dz}$ (see \cite[Proposition 2]{pascal}).
\end{proof}

\begin{Lem}\label{le:2}
    Let $\hat{x}(\varepsilon, z) \in \mathcal{O}[[\varepsilon]]$ be a formal solution of the equation 
    $$\left(\varepsilon ^{k} \frac{d}{dz} - a\right)\hat{x}(\varepsilon, z) = f(z),
    $$
    where $a\in \CC \setminus \{0\}$, $k\in \NN$ and $f$ be a holomorphic function in a complex neighbourhood of zero. 
    Then a function $f$ can be continued analytically in some sector in the direction $kd-\arg a$ 
    and its continuation is of exponential growth of order $1$
    if and only if $\hat{x}(\varepsilon, z)$ is $k$-summable in the direction $d$.
\end{Lem}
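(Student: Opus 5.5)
Compute the formal solution explicitly and reduce to the Balser--Mozo-Fernández characterisation for equation \eqref{eq:starting_point}. Substituting $\hat{x}=\sum_n x_n(z)\varepsilon^n$ into $(\varepsilon^k\frac{d}{dz}-a)\hat{x}=f$ and comparing coefficients gives $x_n\equiv 0$ for $k\nmid n$, and
\begin{equation*}
x_{kj}(z)=-a^{-j-1}f^{(j)}(z),\qquad \hat{x}(\varepsilon,z)=-\sum_{j=0}^{\infty}\frac{f^{(j)}(z)}{a^{j+1}}\,\varepsilon^{kj}.
\end{equation*}
Hence $\hat{x}(\varepsilon,z)=\hat{y}(\varepsilon^k,z)$, where $\hat{y}(\eta,z)=-\sum_j a^{-j-1}f^{(j)}(z)\eta^j$ is the unique formal solution in $\mathcal{O}[[\eta]]$ of $(\eta\frac{d}{dz}-a)\hat{y}=f$, i.e.\ of \eqref{eq:starting_point}. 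Uniqueness is clear, since the recursion determines every coefficient.

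The second step is the standard ramification fact for $k$-summability: a series $\hat{x}(\varepsilon)=\hat{y}(\varepsilon^k)$ is $k$-summable in the direction $d$ if and only if $\hat{y}$ is $1$-summable in the direction $kd$. I would check this directly with Borel transforms. We have
\begin{equation*}
\hat{\mathcal{B}}_k\hat{x}(\varepsilon)=\sum_j \frac{y_j\varepsilon^{kj}}{\Gamma(1+j)}=(\hat{\mathcal{B}}_1\hat{y})(\varepsilon^k).
\end{equation*}
So $\hat{\mathcal{B}}_k\hat{x}$ converges near $0$ iff $\hat{\mathcal{B}}_1\hat{y}$ does. Moreover, $\varepsilon\mapsto\varepsilon^k$ maps a sector $S(d,\alpha)$ onto $S(kd,k\alpha)$, and a function $g(\eta)$ has exponential growth at most $1$ on $\hat{S}_{kd}$ iff $g(\varepsilon^k)$ has exponential growth at most $k$ on $\hat{S}_d$, since $|\eta|=|\varepsilon|^k$.

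One point needs care here. If $g(\varepsilon^k)$ is holomorphic on a sector $S_d$, it need not a priori be of the form $g\circ(\cdot)^k$ with $g$ single-valued on a sector of opening $k\alpha$. Because $\hat{\mathcal{B}}_k\hat{x}$ is a power series in $\varepsilon^k$, its continuation $G(\varepsilon)$ satisfies $G(\varepsilon e^{2\pi i/k})=G(\varepsilon)$ near $0$. Shrinking the sector so that $\alpha<2\pi/k$ makes the $k$-th root single-valued on it, so $g(\eta):=G(\eta^{1/k})$ is well defined and holomorphic on $S(kd,k\alpha)\cup D$. Directions only matter modulo $2\pi$ after the substitution, which is harmless.

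The third step invokes the result of Balser and Balser--Mozo-Fernández quoted in the introduction: $\hat{y}$ is $1$-summable in the direction $kd$ iff $f$ continues analytically to a sector $S_{kd-\arg a}$ with exponential growth at most $1$. Combining the three steps proves the equivalence.

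For a self-contained argument, the key identity is
\begin{equation*}
\hat{\mathcal{B}}_1\hat{y}(\eta,z)=-\sum_j \frac{f^{(j)}(z)}{j!}\Big(\frac{\eta}{a}\Big)^j\,\frac1a=-\frac1a f\!\Big(z+\frac{\eta}{a}\Big).
\end{equation*}
This makes the equivalence transparent: holomorphy and growth of $\eta\mapsto f(z+\eta/a)$ in $\eta\in\hat{S}_{kd}$, uniformly for $|z|\le r$, is the same as holomorphy and growth of $f$ on $\hat{S}_{kd-\arg a}$, with the disc enlarged by $r$. The growth bound transfers because $e^{b|z+\eta/a|}\le e^{br}e^{b|\eta|/|a|}$. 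Thus the only genuine obstacle is the ramification bookkeeping in the second step; the rest is explicit computation.
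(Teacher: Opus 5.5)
Your proposal is correct and is essentially the paper's argument. The paper likewise computes $x_{pk}=-f^{(p)}/a^{p+1}$, with all other coefficients vanishing, and reads the equivalence directly off the identity $\hat{\mathcal{B}}_k\hat{x}(\varepsilon,z)=-\frac{1}{a}f\bigl(z+\frac{\varepsilon^k}{a}\bigr)$; your composition $\hat{\mathcal{B}}_k\hat{x}(\varepsilon)=(\hat{\mathcal{B}}_1\hat{y})(\varepsilon^k)$ combined with your self-contained identity produces exactly this. Your handling of the $k$-th root branch (shrinking the opening below $2\pi/k$) and of uniformity in $|z|\le r$ just makes explicit details that the paper's one-line conclusion leaves implicit.
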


\begin{proof}
    The proof of this lemma is well-known and can be found in \cite{bamo} on page 528.
    We have to rewrite $\hat{x}(\varepsilon, z)$ using the connection with $f(z)$.

    Let $\hat{x}(\varepsilon, z) = \displaystyle \sum _{n=0} ^{\infty} x_{n}(z)\varepsilon ^{n}$. Then the equation $\left(\varepsilon ^{k} \frac{d}{dz} - a\right)\hat{x}(\varepsilon, z) = f(z)$ can be written in the following form: $\displaystyle \sum_{n=k} ^{\infty} x_{n-k}'(z)\varepsilon^{n} - a\displaystyle \sum_{n=0} ^{\infty} x_{n}(z)\varepsilon^{n}  = f(z)$.

    We obtain the following formulas: $x_{0}= -\frac{f(z)}{a}$, $x_{n}(z)=0$ for $n \in \{1, 2, \dots, k-1 \}$ and $x_{n}(z)=\frac{1}{a}\partial _{z} x_{n-k}(z)$ for $n\geq k$.

Further, we can observe, that coefficients $x_{n}$ are nonzero if and only if $n$ is a multiplicity of $k$: $x_{pk}(z)=-\frac{f^{(p)}(z)}{a^{p+1}}$.

Now we can write the formal solution $\hat{x}(\varepsilon, z)$ of the equation in the following form:
$$\hat{x}(\varepsilon, z) = \displaystyle \sum _{n=0} ^{\infty} x_{n}(z)\varepsilon ^{n} = -\displaystyle \sum _{p=0} ^{\infty} \frac{f^{(p)}(z)}{a^{p+1}}\varepsilon ^{pk}.$$
At the end we can compute the Borel transform of $\hat{x}(z,\varepsilon)$:
$$\hat{\mathcal{B}}_{k}\hat{x}(\varepsilon, z)=-\displaystyle \sum _{p=0} ^{\infty} \frac{f^{(p)}(z)}{p!\cdot a^{p+1}}\varepsilon ^{pk} = -\frac{1}{a}\displaystyle \sum _{p=0} ^{\infty} \frac{f^{(p)}(z)}{p!}\cdot\left(\frac{\varepsilon^{k}}{a}\right)^{p} =-\frac{1}{a}f\left(z+\frac{\varepsilon ^{k}}{a}\right).$$
Therefore $\hat{\mathcal{B}}_{k}\hat{x}(\varepsilon, z)$ has the exponential growth at most $k$ on the set $D \times \hat{S}_{d}$ if and only if $f$ can be analytically continued in the direction $kd-\arg a$ and $f$ has the exponential growth at most $1$ on the set $\hat{S} _{kd-\arg a}$.
\end{proof}

\begin{Thm} \label{th1}
Let $k\in\NN$ and $d\in\RR$. W assume that $\hat{x}(\varepsilon, z) \in \EE[[\varepsilon]]$ is a formal solution of the equation
 \begin{equation}
 \label{eq:3}
 P\left(\varepsilon^k \frac{d}{dz}\right)x(\varepsilon, z)=f(z),
 \end{equation}
 where $P(\zeta)$ is any polynomial of degree $q$ with complex coefficients, such that $P(0) \neq 0$. We also assume that  $a_{1}, \dots , a_{l}\in\CC$ are the roots of the equation $P(\zeta)=0$  with multiplicity respectively  $p_{1}, \dots , p_{l}$, where $\sum _{i=1} ^{l} p_{i} = q$, $f(z)$ is holomorphic in a complex neighbourhood of the origin and $d\in\RR$.
 
 Then $f$ can be continued analytically in some sectors in directions $kd-\arg a_{i}$ for $i=1, \dots ,l$ and its continuation is of exponential growth of order $1$ if and only if $\hat{x}(\varepsilon,z)$ is $k$-summable in the direction $d$. 
\end{Thm}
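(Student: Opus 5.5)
The idea is to factor the operator and reduce to Lemma \ref{le:2} through a partial fraction decomposition. Since $P(0)\neq 0$, every root $a_i$ is nonzero. Write $P(\zeta)=c\prod_{i=1}^{l}(\zeta-a_i)^{p_i}$, so that
\[
\frac{1}{P(\zeta)}=\sum_{i=1}^{l}\sum_{j=1}^{p_i}\frac{c_{ij}}{(\zeta-a_i)^{j}}
\]
with constants $c_{ij}\in\CC$. The equation (\ref{eq:3}) has a unique formal solution in $\Oo[[\varepsilon]]$. Indeed, comparing coefficients of $\varepsilon^n$ gives $P(0)x_n=f\delta_{n0}-(\text{terms with }x_{n-k},x_{n-2k},\dots)$, which determines every $x_n$ recursively. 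Uniqueness lets us write $\hat x=\sum_{i,j}c_{ij}\hat x_{ij}$, where $\hat x_{ij}$ is the unique formal solution of $(\varepsilon^k\frac{d}{dz}-a_i)^{j}\hat x_{ij}=f$. This identity holds because all the operators $\varepsilon^k\frac{d}{dz}-a_i$ commute and the partial fraction identity lifts to the operator algebra. Alternatively, following the proof of Lemma \ref{le:2}, one can compute $\hat x$ explicitly as $\hat x=\sum_p b_p f^{(p)}(z)\varepsilon^{pk}$, where $\sum_p b_p\zeta^p$ is the Taylor series of $1/P(\zeta)$ at $0$.

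For sufficiency, suppose $f$ continues with exponential growth at most $1$ in sectors in the directions $kd-\arg a_i$. I iterate Lemma \ref{le:2}. The series $\hat y_1$ solving $(\varepsilon^k\frac{d}{dz}-a_i)\hat y_1=f$ is $k$-summable in direction $d$, and its Borel sum is $-\frac1{a_i}f(z+\varepsilon^k/a_i)$. To go one step further I need summability of $\hat y_2$ with $(\varepsilon^k\frac{d}{dz}-a_i)\hat y_2=\hat y_1$, and Lemma \ref{le:2} only treats $\varepsilon$-independent right-hand sides. It is cleaner to compute directly. The series $\hat x_{ij}$ equals $\sum_p \binom{p+j-1}{j-1}(-1)^j a_i^{-p-j}f^{(p)}(z)\varepsilon^{pk}$, and its $k$-Borel transform is $(-1)^j a_i^{-j}\sum_p\binom{p+j-1}{j-1}\frac{f^{(p)}(z)}{p!}(\varepsilon^k/a_i)^p$. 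The polynomial factor in $p$ can be produced by applying $\left(1+\frac{\varepsilon}{k}\frac{\partial}{\partial\varepsilon}\right)$-type operators to the generating function. So the Borel transform is a finite combination of terms $\varepsilon^{kr}f^{(r)}(z+\varepsilon^k/a_i)$. By Cauchy estimates on slightly smaller sectors, these terms inherit holomorphy in $\hat S_d$ and exponential growth of order $k$. Summing over $i,j$ gives sufficiency.

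For necessity, the main obstacle is separating the contributions of the different roots. The Borel transform of $\hat x$ is $\hat\Bo_k\hat x(\varepsilon,z)=\sum_p b_p f^{(p)}(z)\varepsilon^{pk}/p!$, and I do not see how to isolate each $f(z+\varepsilon^k/a_i)$ from it directly. Instead I use the algebra property from Lemma \ref{le:1}. If $\hat x$ is $k$-summable in direction $d$, then for each $i$ the series
\[
\hat w_i:=\prod_{m\neq i}\Bigl(\varepsilon^k\tfrac{d}{dz}-a_m\Bigr)^{p_m}\Bigl(\varepsilon^k\tfrac{d}{dz}-a_i\Bigr)^{p_i-1}\hat x
\]
is $k$-summable by Lemma \ref{le:1}. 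It satisfies $c\,(\varepsilon^k\frac{d}{dz}-a_i)\hat w_i=f$, so Lemma \ref{le:2} applied to $c\,\hat w_i$ yields the continuation of $f$ in direction $kd-\arg a_i$ with exponential growth of order $1$. This settles necessity, and it suggests that sufficiency might likewise be organized through the same reduction, but the explicit formula above is the safer route there.
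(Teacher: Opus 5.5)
Your proposal is correct and follows essentially the paper's proof. For sufficiency you write the $k$-Borel transform as a finite combination of $(\varepsilon^k/a_i)^r f^{(r)}(z+\varepsilon^k/a_i)$; you get the coefficients from the partial fractions of $1/P$, whereas the paper solves the linear recurrence for $b_n$, which amounts to the same thing. For necessity you use exactly the paper's factorisation $P(\zeta)=(\zeta-a_j)\tilde P_j(\zeta)$ together with Lemma~\ref{le:1} and Lemma~\ref{le:2}.
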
    

\begin{proof}
    $(\Longrightarrow)$ We can assume that 
\begin{equation*}
P(\zeta) = c_{q}\zeta^{q} + \dots +c_{1} \zeta + c_{0}\quad\text{for some}\quad c_{0},c_1, \dots, c_{q} \in \mathbb{C},\quad c_0, c_{q} \neq 0.
\end{equation*}

We can assume that $\hat{x}(\varepsilon, z) = \sum _{n=0} ^{\infty} x_{n}(z) \varepsilon ^{kn}$.
After substituting the formal series $\hat{x}(\varepsilon, z)$ to \eqref{eq:3} we get
\begin{equation*}
c_{q} \sum _{n=0}^{\infty} x_{n}^{(q)} (z) \varepsilon ^{k(n+q)} + \dots +c_{1}\displaystyle \sum _{n=0} ^{\infty} x'_{n} (z) \varepsilon ^{k(n+1)} + c_{0} \displaystyle \sum _{n=0} ^{\infty} x_{n} (z) \varepsilon^{kn} = f(z).
\end{equation*}

After renumbering, we obtain
\begin{equation*}
c_{q} \sum _{n=q} ^{\infty} x_{n-q} ^{(q)} (z) \varepsilon ^{kn} +\dots+ c_{1}\sum _{n=1} ^{\infty} x'_{n-1} (z) \varepsilon ^{kn}  + c_{0} \sum _{n=0} ^{\infty} x_{n} (z) \varepsilon ^{kn} = f(z).
\end{equation*}

Comparing coefficients of respective powers of $\varepsilon$ we get the dependences
\begin{equation*}
x_{0}(z) = \frac{f(z)}{c_{0}},
\end{equation*}
\begin{equation*}
x_{n}(z)=-\frac{1}{c_{0}}\left(c_{n}x_{0}^{(n)}(z) + c_{n-1}x_{1}^{(n-1)}(z) + \dots + c_{1}x_{n-1}'(z)\right),\ n=1,\dots,q, 
\end{equation*}
\begin{equation*}
x_{n}(z)=-\frac{1}{c_0}\Big(c_q x_{n-q}^{(q)}(z)+\dots+c_1x'_{n-1}(z)\Big)\quad\text{for}\quad n\geq q.
\end{equation*}

From the above equations we see that the sequence $(x_n(z))_{n\geq 0}$ has the form $x_n(z)=b_nf^{(n)}(z)$ for some sequence $(b_n)_{n\geq 0}$ of complex numbers. To find this sequence observe that we have
\begin{equation*}
c_{q}\partial _{z} ^{q} b_{n-q} \partial _{z} ^{n-q} f(z) + \dots + c_{1}\partial _{z} b_{n-1} \partial_{z} ^{n-1} f(z) + c_{0}b_{n} \partial _{z} ^{n}f(z) = \left\{
    \begin{array}{ll}
      f(z) \textrm{ for } n = 0\\
      0 \textrm{ for } n > 0.
    \end{array}
  \right.
\end{equation*}

Now we can write down the appropriate difference equation
\begin{equation}
\label{eq:2_2}
c_{q}b_{n-q} + \dots + c_{1}b_{n-1} + c_{0}b_{n} = 0.
\end{equation}
With the initial condition $c_{0}b_{0}=1$, we get $b_{0} = \frac{1}{c_{0}}$.

The characteristic polynomial of the difference equation \eqref{eq:2_2} has the form
\begin{equation*}
w (\lambda) = c_{0} \lambda ^{q} + c_{1} \lambda ^{q-1} + \dots  + c_{q} = \lambda ^{q}\Big(c_{0} +\frac{c_{1}}{\lambda} + \dots + \frac{c_{q}}{\lambda ^{q}}\Big) = \lambda ^{q} P\left(\frac{1}{\lambda}\right).
\end{equation*}
The polynomial $P(\zeta)$ can also be written as
\begin{equation*}
P(\zeta) = c_{q}\prod _{i=1} ^{l} (\zeta - a_{i}) ^{p_{i}}.
\end{equation*}

Then the characteristic polynomial is in the following form
\begin{equation*}
w(\lambda) = \lambda ^{q} c_{q} \displaystyle \prod _{i=1} ^{l} \left(\frac{1}{\lambda} - a_{i}\right)^{p_{i}} = c_{q} \displaystyle \prod _{i=1} ^{l} (1- \lambda a_{i}) ^{p_{i}}.
\end{equation*}
Hence the characteristic polynomial has $l$ roots
$\lambda_{i} = \frac{1}{a_{i}}$ of multiplicity $p_{i}$ for $i=1,\dots,l$. According to the theory of difference equations with constant coefficients
we get
\begin{equation}
\label{eq:b_n}
b_{n} = \sum _{i=1} ^{l}\sum _{j=0} ^{p_{i}-1} d_{i,j} n^{\underline{j}} \left(\frac{1}{a_{i}}\right)^{n}\quad \text{for some constants}\quad d_{i,j} \in \mathbb{C},
\end{equation}
where $n^{\underline{j}}:=n(n-1)\dots(n-j+1)$.

Therefore
\begin{equation*}
\hat{x}(\varepsilon, z) = \sum _{n=0} ^{\infty} b_{n} \partial _{z} ^{n} f(z) \varepsilon ^{kn}=
\sum _{n=0} ^{\infty}\left(\sum _{i=1} ^{l}\sum _{j=0} ^{p_{i}-1} d_{i,j} n^{\underline{j}} \left(\frac{1}{a_{i}}\right)^{n}\right) \partial _{z} ^{n} f(z) \varepsilon ^{kn}
\end{equation*}

So, using the Taylor formula we can write the Borel transform of the solution $\hat{x}(\varepsilon, z)$ as
\begin{multline*}
\hat{\mathcal{B}} _{k} \hat{x} (\varepsilon,z) = \sum _{n=0} ^{\infty} \frac{ b_{n} \partial _{z} ^{n} f(z) \varepsilon ^{kn}}{\Gamma(1+\frac{kn}{k})} = \sum _{n=0} ^{\infty} \frac{ b_{n} \partial _{z} ^{n} f(z) \varepsilon ^{kn}}{n!}\\
= \sum _{i=1} ^{l}\sum _{j=0} ^{p_{i}-1} d_{i,j} \displaystyle \sum _{n=j} ^{\infty} \frac{n^{\underline{j} } \left(\frac{1}{a_{i}}\right)^{n} \partial _{z} ^{n} f(z) \varepsilon ^{kn}}{n!} = \sum _{i=1} ^{l}\sum _{j=0} ^{p_{i}-1} d_{i,j} \sum _{n=0} ^{\infty} \frac{\left(\frac{1}{a_{i}}\right)^{n+j} \partial_{z} ^{n+j} f(z) \varepsilon ^{k(n+j)}}{n!}\\
= \sum _{i=1} ^{l}\sum _{j=0} ^{p_{i}-1} d_{i,j} \left(\frac{\varepsilon^k }{a_{i}}\right)^{j} \sum _{n=0} ^{\infty} \frac{\partial _{z} ^{n} f^{(j)} (z) \left(\frac{\varepsilon ^{k}}{a_{i}}\right)^{n}}{n!} = \sum _{i=1} ^{l}\sum _{j=1} ^{p_{i}-1} \left(\frac{\varepsilon ^{k}}{a_{i}}\right)^{j}d_{i,j} f^{(j)} \left(z+\frac{\varepsilon ^{k}}{a_{i}}\right).
\end{multline*}

We can observe that if $f(z) \in \mathcal{O} ^{1}(\hat{S}_{kd-\arg a_{i}})$, then also $f^{(j)}(z) \in \mathcal{O} ^{1}(\hat{S}_{kd-\arg a_{i}})$ and the function $\varepsilon \mapsto f^{(j)}(z+\frac{\varepsilon ^{k}}{a_{i}})$ belongs to the space $\mathcal{O}^{k}(S_{d})$.

So the formal power series solution $\hat{x}(\varepsilon, z) = \sum _{n=0} ^{\infty} x_{n}(z) \varepsilon ^{kn}$ of \eqref{eq:3} is $k$-summable in the direction $d$.

$(\Longleftarrow)$ Let $\hat{x}(\varepsilon, z)$ be $k$-summable in the direction $d$. We can write a polynomial $P(\zeta)$ in a factored form: $P(\zeta) = (\zeta-a_{j}) \cdot \tilde{P}_{j}(\zeta)$, where 
$$
\tilde{P}_{j}(\zeta) = \frac{P(\zeta)}{(\zeta-a_{j})} = c_{q} (\zeta - a_{j})^{p_{j}-1}\prod _{i=1, i \neq j} ^{l} (\zeta - a_{i}) ^{p_{i}}\quad \text{for}\quad j = 1, 2, \dots , l.
$$
Let $y_{j} = y_{j}(\varepsilon, z) = \tilde{P}_{j}(\varepsilon ^{k} \frac{d}{dz}) \hat{x}(\varepsilon, z)$. We have $P(\varepsilon ^{k} \frac{d}{dz}) \hat{x}(\varepsilon, z)= f(z)$, so it means that $\left(\varepsilon ^{k} \frac{d}{dz} - a_{j}\right) y_{j} = f$.

By  Lemma \ref{le:1} and the definition of $y_{j}$ we know that $y_{j}$ is also $k$-summable in the direction $d$.

The solution of the equation $\left(\varepsilon ^{k} \frac{d}{dz} - a_{j}\right) y_{j} = f$ is $k$-summable for $j=1,2,\dots,l$, so according to Lemma \ref{le:2} the function $f$ can be continued analytically in some sectors in directions $kd-\arg a_{j}$ for $j=1,2,\dots,k$ with the exponential growth of order $1$. Thanks to the arbitrariness of the choice of $j$ we get a thesis.
\end{proof}  

\section{Summability of singularly perturbed moment differential equations}
We obtain similar results for moment differential equations. In this section we show corresponding lemmas and theorem that are generalisations of the ones presented in the previous section.

\begin{Lem}\label{le:3}
    Suppose $\hat{x}(\varepsilon, z) \in \mathcal{O}[[\varepsilon]]$ is $k$-summable in the direction $d$, $Q(\zeta)$ is a polynomial with complex coefficients and $(m(n))_{n\geq0}$ is a regular sequence of moments of order $\frac{1}{\tilde{k}}$. Then $Q(\varepsilon ^{k} \partial _{m,z})  \hat{x}(\varepsilon, z)$ is also $k$-summable in the direction $d$.
\end{Lem}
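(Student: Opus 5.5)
The plan is to reduce the statement to two facts: $k$-summable series in $\varepsilon$ form an algebra, and the moment derivative $\partial_{m,z}$ acts only on the $z$-variable as a bounded operator between spaces of holomorphic functions on nested discs. This mirrors the proof of Lemma \ref{le:1}. Write $Q(\zeta)=\sum_{j=0}^{n}a_j\zeta^j$, so that
\[
Q(\varepsilon^k\partial_{m,z})\hat{x}(\varepsilon,z)=\sum_{j=0}^{n}a_j\varepsilon^{kj}\,\partial_{m,z}^j\hat{x}(\varepsilon,z).
\]
Since $k$-summable series in the direction $d$ form a linear space closed under multiplication (\cite[Proposition 2]{pascal}), and $\varepsilon^{kj}$ is trivially $k$-summable, it suffices to prove that $\partial_{m,z}\hat{x}$ is $k$-summable in the direction $d$ whenever $\hat{x}$ is. Iterating then handles $\partial^j_{m,z}$, at the cost of a finite number of shrinkings of the disc in $z$.

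First I will show a Cauchy-type estimate for $\partial_{m,z}$. Let $\varphi(z)=\sum_{n\ge0}\varphi_n z^n$ be holomorphic on $D_r$ and continuous on $\overline{D}_r$. From the definition of the moment differential operator,
\[
\partial_{m,z}\varphi(z)=\sum_{n\ge0}\frac{m(n+1)}{m(n)}\,\varphi_{n+1}z^n .
\]
Regularity of $m$ gives $m(n+1)/m(n)\le C(n+1)^{1/\tilde{k}}$, and the Cauchy inequalities give $|\varphi_{n+1}|\le \|\varphi\|_{\Oo}\,r^{-n-1}$, where $\|\cdot\|_{\Oo}$ is the norm \eqref{eq:norm} on $D_r$. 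Hence for every $0<r'<r$ we get
\[
\sup_{|z|\le r'}|\partial_{m,z}\varphi(z)|\le \|\varphi\|_{\Oo}\,\frac{C}{r}\sum_{n\ge0}(n+1)^{1/\tilde{k}}\Big(\frac{r'}{r}\Big)^n=:K_{r,r'}\|\varphi\|_{\Oo}.
\]
So $\partial_{m,z}$ is a bounded linear operator from the space of functions on $\overline{D}_r$ to the space on $\overline{D}_{r'}$, with norm $K_{r,r'}$ independent of $\varphi$.

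Next I will use the fact that $\hat{\mathcal{B}}_k$ acts on the $\varepsilon$-coefficients while $\partial_{m,z}$ acts on the $z$-variable, so they commute coefficientwise: $\hat{\mathcal{B}}_k(\partial_{m,z}\hat{x})=\partial_{m,z}(\hat{\mathcal{B}}_k\hat{x})$. By assumption, $v(\varepsilon,z):=\hat{\mathcal{B}}_k\hat{x}(\varepsilon,z)$ converges and extends holomorphically to $\hat{S}_d$ with values in functions on $\overline{D}_r$, and satisfies $\sup_{|z|\le r}|v(\varepsilon,z)|\le Ce^{a|\varepsilon|^k}$ on smaller disc-sectors. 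Applying the bound of the previous step pointwise in $\varepsilon$ gives
\[
\sup_{|z|\le r'}|\partial_{m,z}v(\varepsilon,z)|\le K_{r,r'}\,C e^{a|\varepsilon|^k},
\]
and holomorphy in $\varepsilon$ is preserved because $\partial_{m,z}$ is a bounded linear operator. Therefore $\partial_{m,z}\hat{x}$ is $k$-summable in the direction $d$, now with coefficients in the space for the slightly smaller disc $D_{r'}$.

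The main obstacle is the estimate for $\partial_{m,z}$ in the first step. Unlike $\frac{d}{dz}$, this operator has no classical integral formula, so it is exactly here that regularity of $m$ is needed. The polynomial bound $m(n+1)/m(n)\le C(n+1)^{1/\tilde{k}}$ is what makes the series defining $K_{r,r'}$ converge. A minor point is that the radius of the disc in $z$ must shrink finitely many times (once per power of $\partial_{m,z}$, i.e. at most $\deg Q$ times), which I will absorb by interpreting $\Oo$ with a smaller radius, as is implicit in Lemma \ref{le:1}.
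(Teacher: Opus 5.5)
Your proof is correct and follows the same route as the paper: expand $Q$, use that $k$-summable series are closed under linear combinations and multiplication by $\varepsilon^{kj}$, and reduce everything to stability of $k$-summability under $\partial_{m,z}$. The only difference is that the paper cites that stability (\cite[Propositions 10.13 and 10.21]{pas}), whereas you prove it directly: the regularity bound and the Cauchy inequalities show that $\partial_{m,z}$ is bounded from $\Oo$ on $\overline{D}_r$ to $\Oo$ on $\overline{D}_{r'}$, and it commutes with $\hat{\mathcal{B}}_k$ in $\varepsilon$, which also makes explicit the shrinking of the $z$-disc that the paper leaves implicit.
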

\begin{proof}
    We can 
    write a polynomial $Q(\zeta)$ in the form $Q(\zeta)= a_{n}\zeta ^{n} + a_{n-1} \zeta ^{n-1} + \dots +a_{1} \zeta +a_{0}$. The rest of the proof of this lemma results from the fact that the space of $k$-summable sequences forms a differential algebra, so it is stable under multiplication and derivation $\partial _{m,z}$ (see \cite[Propositions 10.13 and 10.21]{pas}).
\end{proof}

\begin{Lem}\label{le:4}
    Let $\hat{x}(\varepsilon, z) \in \EE[[\varepsilon]]$ be a formal solution of the equation 
    $$\left(\varepsilon \partial_{m,z} - a\right)\hat{x}(\varepsilon, z) = f(z),$$ where $a\in \CC \setminus \{0\}$, $\tilde{k}>0, d\in \RR$, $(m(n))_{n\geq0}$ be a regular sequence of moments of order $\frac{1}{\tilde{k}}$ and $f$ be a holomorphic function in a complex neighbourhood of zero. Then  a function $f$ can be continued analytically in some sector in the direction $d-\arg a$ 
    and its continuation is of exponential growth of order $\tilde{k}$   if and only if  $\hat{x}(\varepsilon, z)$ is $\tilde{k}$-summable in the direction $d$.
\end{Lem}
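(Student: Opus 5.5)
We mimic the proof of Lemma \ref{le:2}, replacing $\frac{d}{dz}$ by $\partial_{m,z}$ and the classical Borel transform by the $m$-Borel transform. Write $\hat{x}(\varepsilon,z)=\sum_{n\ge 0}x_n(z)\varepsilon^n$ and compare coefficients in $(\varepsilon\partial_{m,z}-a)\hat{x}=f$. This gives $x_0=-f/a$ and $x_n=\frac1a\partial_{m,z}x_{n-1}$ for $n\ge1$. Hence
\[
\hat{x}(\varepsilon,z)=-\sum_{p=0}^{\infty}\frac{\partial_{m,z}^{p}f(z)}{a^{p+1}}\,\varepsilon^{p}.
\]
By the remark on general moment summability \cite[Section 6.5]{ba1}, $\tilde{k}$-summability of $\hat{x}$ in the direction $d$ is equivalent to the following: the $m$-Borel transform $\hat{\mathcal{B}}_{m,\varepsilon}\hat{x}$, taken with respect to $\varepsilon$ with the same sequence $m$ of order $1/\tilde{k}$, converges and extends to $\hat{S}_d$ with exponential growth at most $\tilde{k}$. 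So the first task is to compute this transform in closed form.

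Expand $f(z)=\sum_{j\ge0}\frac{f_j}{m(j)}z^j$, so that $\partial_{m,z}^p f(z)=\sum_{j\ge0}\frac{f_{j+p}}{m(j)}z^j$. Then
\[
\hat{\mathcal{B}}_{m,\varepsilon}\hat{x}(\varepsilon,z)=-\frac1a\sum_{p,j\ge0}\frac{f_{j+p}}{m(j)m(p)}\,z^j\Big(\frac{\varepsilon}{a}\Big)^{p}=-\frac1a\,T_m f\Big(z,\frac{\varepsilon}{a}\Big).
\]
Here $T_m$ is the moment translation operator, characterised by $T_m f(z,\cdot)|_{\cdot=0}=f$ and by $\partial_{m,z}T_mf=\partial_{m,w}T_mf$. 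This is the moment analogue of $f(z+\varepsilon/a)$ in Lemma \ref{le:2}. The double series converges near the origin, because $f_n\le CB^n m(n)$ and regularity gives $m(j+p)\le C^{j+p}m(j)m(p)$.

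The statement therefore reduces to a fact about the moment translation $T_mf(z,w)$. For $z$ in a small disc $D$, it extends holomorphically to $w\in\hat{S}_{d-\arg a}$ with exponential growth of order $\tilde{k}$ in $w$ if and only if $f$ extends to $\hat{S}_{d-\arg a}$ with growth of order $\tilde{k}$. Substituting $w=\varepsilon/a$ turns the direction $d$ in $\varepsilon$ into $d-\arg a$ in $w$.

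The direction ($\Leftarrow$) is easy. Set $z=0$: then $T_mf(0,w)=f(w)$, and $\hat{x}(\varepsilon,0)$ is $\tilde{k}$-summable because the coefficient map is continuous. So $f(\varepsilon/a)$ extends with the required growth.

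The direction ($\Rightarrow$) is the main obstacle. I would represent $T_mf$ by a Cauchy-type integral using the kernel functions $e,E$ of $m$:
\[
T_m f(z,w)=\frac{1}{2\pi i}\oint_{|\zeta|=\rho} f(\zeta)\,\mathcal{K}(z,w,\zeta)\,d\zeta,
\]
with kernel $\mathcal{K}(z,w,\zeta)=\frac1\zeta\sum_{j,p}\frac{m(j+p)}{m(j)m(p)}\frac{z^jw^p}{\zeta^{j+p}}$. For $z$ small, I would deform the contour into a path around the sector $S_{d-\arg a}$, which is possible because $f$ is holomorphic there. The expected difficulty is to show that, for $w$ in a subsector, the kernel decays outside a neighbourhood of the ray through $w$, so that the exponential growth of $f$ of order $\tilde{k}$ transfers to $T_mf$ in $w$. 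For regular moment sequences this is the standard estimate used for moment-translation or heat-type kernels (cf. \cite{mi1,mi2}). If this direct route proves awkward, I would fall back on the general theory in \cite{pas}, where closedness of $\tilde{k}$-summable series under $\partial_{m,z}$ with regular $m$ is combined with the explicit series above.
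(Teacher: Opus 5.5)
Your computation of $\hat{x}=-\sum_{p}a^{-p-1}\partial_{m,z}^{p}f\,\varepsilon^{p}$ and the identification $\hat{\mathcal{B}}_{m,\varepsilon}\hat{x}=-\frac{1}{a}T_mf(z,\varepsilon/a)$ are correct. Your proof that summability of $\hat{x}$ forces $f$ to extend is also correct. That argument (evaluate at $z=0$ to get $-\frac{1}{a}f(\varepsilon/a)$) is exactly the Remark following the lemma. It is more direct than the paper's own proof of that direction, which goes through the symmetric problem (\ref{uklad2}) and reruns the integral-representation argument with $\varepsilon$ and $z$ interchanged. For the other direction you follow the paper's strategy: a Cauchy integral with a kernel built from $e_m,E_m$, then contour deformation as in \cite{mi1}. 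As written, however, this direction has a concrete gap.

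The kernel you write down, $\mathcal{K}(z,w,\zeta)=\frac{1}{\zeta}\sum_{j,p}\frac{m(j+p)}{m(j)m(p)}\frac{z^jw^p}{\zeta^{j+p}}$, is a power series that converges only when $|z|$ and $|w|$ are small compared with $|\zeta|$. Already for $m(n)=n!$ it is $1/(\zeta-z-w)$ expanded in $|\zeta|>|z|+|w|$. Any closed contour in $\hat{S}_{d-\arg a}$ winding around the origin must pass through the disc part, where $|\zeta|$ is small. So once $|w|$ is large, your series diverges on part of every admissible contour, and the deformation has nothing to act on. What is missing is the analytic continuation of $\mathcal{K}$ in $\zeta$. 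The paper obtains it from the Laplace-type representation of \cite[Proposition 3]{mi1}, which is the formula for $y(\varepsilon,z)$ in the proof with $w=\lambda\varepsilon$:
\[
\mathcal{K}(z,w,\zeta)=\int_0^{\infty(\theta)}E_m(zs)\,E_m(ws)\,\frac{e_m(\zeta s)}{\zeta s}\,ds .
\]
Choose $\theta$ so that $\zeta s$ stays in the sector where $e_m$ decays while $ws$ leaves the sector where $E_m$ grows exponentially. This shows that, for $|z|$ small, $\mathcal{K}$ is holomorphic in $\zeta$ outside two sets: a small disc at the origin, and a narrow sectorial neighbourhood of the ray through $w$, truncated at radius comparable to $|w|$. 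It also comes with estimates. This holomorphy off a neighbourhood of the initial segment of the ray through $w$ is the property you need, not decay of the kernel away from that ray. It lets you replace the circle by a contour inside $\hat{S}_{d-\arg a}$ and convert the bound $e^{c|\zeta|^{\tilde{k}}}$ on $f$ into a bound $e^{c'|w|^{\tilde{k}}}$ in $w$. This is the content of \cite[Lemma 5]{mi1}, which the paper invokes. Your fallback via \cite{pas} would not fill the gap either. Stability of $\tilde{k}$-summable series under $\partial_{m,z}$ and products presupposes a summable series to start from, whereas here the only input is the function $f$.
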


\begin{proof}
    Let $\hat{x}(\varepsilon, z)=\displaystyle \sum _{n=0} ^{\infty} \frac{x_{n}(z)}{m(n)}\varepsilon^{n}$. Then we can write the equation in the form: 
    $$\displaystyle \sum _{n=0} ^{\infty} \frac{\partial_{m,z} x_{n}(z)}{m(n)}\varepsilon^{n+1} - a\displaystyle \sum _{n=0} ^{\infty} \frac{x_{n}(z)}{m(n)}\varepsilon^{n} = f(z),$$
    $$\displaystyle \sum _{n=1} ^{\infty} \frac{\partial_{m,z}x_{n-1}(z)}{m(n-1)}\varepsilon^{n} - a\displaystyle \sum _{n=0} ^{\infty} \frac{x_{n}(z)}{m(n)}\varepsilon^{n} = f(z).$$
    So we obtain formulas: $x_{0}=-\frac{f(z)}{a}$, $x_{n}(z)=-\frac{1}{a^{n+1}}\partial^{n}_{m,z}f(z)m(n)$. Therefore we can write the solution of the equation in the form: $\hat{x}(\varepsilon, z)=-\frac{1}{a}\displaystyle \sum _{n=0} ^{\infty} \frac{1}{a^{n}}\partial^{n}_{m,z}f(z)\varepsilon^{n}$.

    $(\Longrightarrow)$
    Let \begin{equation} \label{borel1}
        y(\varepsilon, z)=\hat{\mathcal{B}}_{m,\varepsilon}\hat{x}(\varepsilon, z) = -\frac{1}{a} \displaystyle \sum _{n=0} ^{\infty} \frac{\partial_{m,z} ^{n} f(z)}{m(n)} \left(\frac{\varepsilon}{a}\right)^{n}.
    \end{equation}
    Using the Cauchy integral formula and the definition of moment functions for $z\in D_{\varepsilon}$, for any function $f$ we get the following result:
    $$\partial _{m,z} ^{n} f(0) = \frac{1}{2 \pi i} \displaystyle \oint _{|\omega|=\varepsilon} f(\omega) \displaystyle \int _{0} ^{\infty (\theta)} \frac{\zeta ^{n} e_{m} (\zeta \omega)}{\zeta \omega} d\zeta d\omega,$$
    so, using the Taylor formula we get: 
    \begin{equation}\label{taylor}
    \partial ^{n} _{m,z} f(z) = \frac{1}{2 \pi i} \displaystyle \oint _{|\omega|=\varepsilon}f(\omega) \displaystyle \int _{0} ^{\infty (\theta)} \zeta ^{n} E_{m} (z \zeta) \frac{e_{m} (\omega \zeta)}{\omega \zeta} d \zeta d \omega \end{equation} \\ where $ \theta \in (-\arg \omega - \frac{\pi}{2l}, - \arg \omega + \frac{\pi}{2l})$.
    
    The whole proof of this equation can be found in \cite[Proposition 3]{mi1}.

    Observe that $y(\varepsilon, z)$ satisfies:

    \begin{equation}\label{uklad1}
  \left\{
    \begin{array}{ll}
      (\partial _{m, \varepsilon} - \lambda \partial _{m, z})y=0\\
      y(0, z)=\tilde{f}(z) \textrm{, where} \tilde{f}(z) = -\frac{f(z)}{a}, \lambda = \frac{1}{a}
    \end{array}
  \right.
\end{equation}
    Using the result \eqref{taylor}, we conclude that:
    \begin{multline*}
    y(\varepsilon,z) = \displaystyle \sum _{n=0} ^{\infty} \frac{\lambda ^{n} \partial _{m,z} ^{n} \tilde{f} (z)}{m(n)}\varepsilon ^{n}\\ = \displaystyle \sum _{n=0} ^{\infty} \frac{1}{2 \pi i} \displaystyle \oint _{|\omega|=\varepsilon} \tilde{f}(\omega)  \displaystyle \int _{0} ^{\infty (\theta)} \frac{e _{m}(\zeta \omega)}{\zeta \omega} \zeta ^{n} E _{m}(\zeta z) d \zeta d \omega \cdot \frac{\lambda ^{n} \varepsilon ^{n}}{m(n)}\\= \frac{1}{2 \pi i} \displaystyle \oint _{|\omega|} \tilde{f}(\omega) \displaystyle \int _{0} ^{\infty (\theta)} \frac{e _{m}(\zeta \omega)}{\zeta \omega} E _{m}(\zeta z) E _{m}(\lambda \varepsilon \zeta) d \zeta.
    \end{multline*}
    We know that $\tilde{f}(z)\in \mathcal{O}^{\tilde{k}}(\hat{S}_{d+\arg \lambda})$. Deforming the path of integration with respect to $\omega$ as in \cite[Lemma 5]{mi1}, we get $y(\varepsilon, z) \in \mathcal{O}^{\tilde{k}}(\hat{S_{d}}\times D)$.

   $(\Longleftarrow)$ Let us consider the following equation: 
    \begin{equation}\label{uklad2}
  \left\{
    \begin{array}{ll}
      (\partial _{m, z} - \frac{1}{\lambda} \partial _{m, \varepsilon})y=0\\
      y(\varepsilon, 0)=\tilde{g}(\varepsilon).
    \end{array}
  \right.
\end{equation}

We can make an observation, that $(\partial _{m, z} - \frac{1}{\lambda} \partial _{m, \varepsilon})y=0$ is satisfied by $y(\varepsilon,z)$ if and only if $(\partial _{m, \varepsilon} - \lambda \partial _{m, z})y=0$ is satisfied by $y(\varepsilon,z)$.

    Thanks to the symmetry of (\ref{uklad1}) and (\ref{uklad2}), in this part of a proof we should apply the same reasoning.
    This proves that if $y(\varepsilon,z) \in \mathcal{O}^{\tilde{k}}(\hat{S_{d}}\times D)$ and $\tilde{g}(\varepsilon) = y(\varepsilon,0) \in \mathcal{O}^{\tilde{k}}(\hat{S}_{d})$, then as previously we conclude that $y(\varepsilon,z)\in\Oo^{\tilde{k}}(D\times\hat{S}_{d+\arg\lambda})$. Therefore $f(z)=-ay(0,z) \in \mathcal{O} ^{\tilde{k}} (\hat{S} _{d - \arg a})$.    
\end{proof}

\begin{Rem}
    We can observe that there holds the following equality 
    \begin{equation}\label{borel}
    \hat{\mathcal{B}}_{m,\varepsilon}\hat{x}(\varepsilon, 0)=-\frac{1}{a}\displaystyle \sum _{n=0} ^{\infty} \frac{\partial^{n}_{m,z}f(0)}{m(n)}\left(\frac{\varepsilon}{a}\right)^{n}=-\frac{1}{a}f\left(\frac{\varepsilon}{a}\right).
    \end{equation}
    So since $\hat{x}(\varepsilon, z)\in \mathcal{O}\{\varepsilon\}_{\tilde{k},d}$, we see that $ \hat{\mathcal{B}}_{m,\varepsilon}\hat{x}(\varepsilon, 0) \in \mathcal{O}^{\tilde{k}}(\hat{S}_{d}) $ and by (\ref{borel}) we get $f \in \mathcal{O} ^{\tilde{k}}(S_{d-\arg a})$.
\end{Rem}

\begin{Lem}\label{le:5}
    Let $\hat{x}(\varepsilon, z) \in \EE[[\varepsilon]]$ be a formal solution of the equation 
    $$\left(\varepsilon ^{k} \partial_{m,z} - a\right)\hat{x}(\varepsilon, z) = f(z),
    $$
    where $a\in \CC \setminus \{0\}$, $k\in \NN$ $\tilde{k}>0, d\in \RR$, $(m(n))_{n\geq0}$ is a regular sequence of moments of order $\frac{1}{\tilde{k}}$ and $f$ is a function holomorphic in a complex neighbourhood of zero. Then a function $f$ can be continued analytically in some sector in the direction $kd-\arg a$ 
    and its continuation is of exponential growth of order $\tilde{k}$   if and only if  $\hat{x}(\varepsilon, z)$ is $k\tilde{k}$-summable in the direction $d$.
\end{Lem}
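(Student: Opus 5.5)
Lemma \ref{le:5} reduces to Lemma \ref{le:4} through the substitution $\eta=\varepsilon^k$. First I would compute the coefficients as in the proof of Lemma \ref{le:2}. Writing $\hat{x}(\varepsilon,z)=\sum_{n\ge0}x_n(z)\varepsilon^n$ and comparing coefficients gives $x_0=-f/a$, $x_n=0$ for $k\nmid n$, and $x_{pk}=\frac{1}{a}\partial_{m,z}x_{(p-1)k}$. Hence
\[
\hat{x}(\varepsilon,z)=-\frac{1}{a}\sum_{p=0}^{\infty}\frac{\partial_{m,z}^p f(z)}{a^p}\,\varepsilon^{pk}=\hat{w}(\varepsilon^k,z),
\qquad \hat{w}(\eta,z):=-\frac{1}{a}\sum_{p=0}^{\infty}\frac{\partial_{m,z}^p f(z)}{a^p}\,\eta^{p}.
\]
Here $\hat{w}(\eta,z)$ is exactly the formal solution of $(\eta\partial_{m,z}-a)w=f$ computed in the proof of Lemma \ref{le:4}.

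Next I would use the standard ramification fact: a series $\hat{w}(\eta)$ is $\tilde{k}$-summable in the direction $kd$ if and only if $\hat{w}(\varepsilon^k)$ is $k\tilde{k}$-summable in the direction $d$. To check this on Borel transforms, note $\Gamma(1+\frac{pk}{k\tilde{k}})=\Gamma(1+\frac{p}{\tilde{k}})$. Therefore $\hat{\mathcal{B}}_{k\tilde{k}}\hat{x}(\varepsilon,z)=(\hat{\mathcal{B}}_{\tilde{k}}\hat{w})(\varepsilon^k,z)$. Now $\varepsilon\mapsto\varepsilon^k$ maps a disc-sector $\hat{S}_d$ onto a disc-sector $\hat{S}_{kd}$, with the opening multiplied by $k$. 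It also turns exponential growth of order $\tilde{k}$ in $\eta$ into growth of order $k\tilde{k}$ in $\varepsilon$, since $|\eta|^{\tilde{k}}=|\varepsilon|^{k\tilde{k}}$. Convergence near the origin and holomorphy transfer in the same way in both directions: a function $g(\varepsilon^k)$ holomorphic on $\hat{S}_d$ defines $g$ holomorphically on $\hat{S}_{kd}$, because near $0$ it is a convergent series in $\varepsilon^k$. By Balser's general moment summability (the Remark in Section 2), the same statement holds with the $m$-Borel transform in place of $\hat{\mathcal{B}}_{\tilde{k}}$, so the choice of kernel does not matter.

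Finally I would apply Lemma \ref{le:4} to $\hat{w}$ with direction $kd$. It says that $\hat{w}$ is $\tilde{k}$-summable in the direction $kd$ if and only if $f$ continues analytically to a sector in the direction $kd-\arg a$ with exponential growth of order $\tilde{k}$. Combined with the previous paragraph, this gives both implications of Lemma \ref{le:5}.

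The main obstacle is the ramification step. One must check that sector openings and the exponential-growth estimates correspond exactly under $\eta=\varepsilon^k$, uniformly on subsectors as the definition of $\mathcal{O}^{k}$ requires. One must also make sure the Borel transform used in Lemma \ref{le:4} (the $m$-Borel transform) is matched consistently with $\hat{\mathcal{B}}_{k\tilde{k}}$. I would handle this by working with $\hat{\mathcal{B}}_{\tilde{k}}$ throughout and invoking the equivalence of moment summability methods of the same order.
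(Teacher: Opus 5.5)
Your proposal is correct and follows essentially the paper's route. The paper also writes $\hat{x}(\varepsilon,z)$ as the solution from Lemma~\ref{le:4} with $\varepsilon$ replaced by $\varepsilon^k$, notes that its Borel transform is $y(\varepsilon^k,z)$ with $y$ from \eqref{borel1}, and transfers sectors and exponential growth of order $\tilde{k}$ in $\eta=\varepsilon^k$ to order $k\tilde{k}$ in $\varepsilon$ in both directions; the only cosmetic difference is that it uses the ramified moment sequence $\tilde{m}(n)=m(n/k)$ rather than your pair $\hat{\mathcal{B}}_{k\tilde{k}}$, $\hat{\mathcal{B}}_{\tilde{k}}$ linked by $\Gamma(1+\frac{pk}{k\tilde{k}})=\Gamma(1+\frac{p}{\tilde{k}})$.
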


\begin{proof}
    Repeating the proof of Lemma \ref{le:4} with $\varepsilon$ replaced by $\varepsilon ^{k}$, we see that the formal solution $\hat
    {x}(\varepsilon, z)$ is given by 
    $$\hat{x} (\varepsilon, z) = - \frac{1}{a} \displaystyle \sum _{n=0} ^{\infty} \frac{1}{a^{n}} \partial _{m,z} ^{n} f(z) \varepsilon ^{nk}.
    $$
    Let $\tilde{m}(n) := m \left( \frac{n}{k}\right)$ and 
    \begin{equation} \label{eq:form_y}
    \tilde{y}(\varepsilon, z):= \hat{\Bo} _{\tilde{m},\varepsilon} \hat{x}(\varepsilon, z) = -\frac{1}{a} \displaystyle \sum _{n=0} ^{\infty} \frac{\partial _{m, z} ^{n} f(z)}{m(n)} \left(\frac{\varepsilon ^{k}}{a}\right)^{n}.
    \end{equation}
    Then $\tilde{y}(\varepsilon, z) = y(\varepsilon ^{k}, z)$, where $y(\varepsilon, z)$ is defined by (\ref{borel1}).
    
    $(\Longrightarrow)$ If $f \in \mathcal{O} ^{\tilde{k}} (\hat{S}_{kd - \arg a})$, then by the proof of Lemma \ref{le:4} we conclude that $y(\varepsilon, z) \in \mathcal{O} ^{\tilde{k}} (\hat{S}_{kd} \times D)$. It means that $\tilde{y}(\varepsilon, z) \in \mathcal{O} ^{k \tilde{k}} (\hat{S}_{d} \times D)$.
    
    $(\Longleftarrow)$ If $\tilde{y}(\varepsilon, z) \in \mathcal{O} ^{k \tilde{k}} (\hat{S}_{d} \times D)$, then $y(\varepsilon, z) \in \mathcal{O} ^{\tilde{k}} (\hat{S}_{kd} \times D)$ and by the proof of Lemma \ref{le:4} we conclude that $f(z) \in \mathcal{O} ^{\tilde{k}} (\hat{S}_{kd-\arg a})$.
\end{proof}

\begin{Rem}\label{re:5}
    If $f(z) \in \mathcal{O}^{\tilde{k}}(\hat{S}_{kd-\arg a})$, then by Lemma \ref{le:5} we get $\tilde{y}(\varepsilon, z) = \hat{\Bo}_{\tilde{m}, \varepsilon} \hat{x}(\varepsilon, z) \in \mathcal{O}^{k \tilde{k}} (\hat{S}_{d} \times D)$. Moreover, by the Cauchy integral formula if $\tilde{y}(\varepsilon, z) \in \mathcal{O}^{K} (\hat{S}_{d} \times D)$, then $\varepsilon \partial _{\varepsilon} \tilde{y}(\varepsilon, z) \in \mathcal{O}^{K} (\hat{S}_{d} \times D)$ for every $K>0$. Hence, we conclude that if $f(z)\in \mathcal{O} ^{k \tilde{k}} (\hat{S}_{kd- \arg a})$, then $(\varepsilon \partial _{\varepsilon})^{j} \tilde{y}(\varepsilon, z) \in \mathcal{O}^{k \tilde{k}} (\hat{S}_{d} \times D)$ for every $j \in \NN_0$.
\end{Rem}


\begin{Thm}\label{thm:2}
Let $k\in\NN, \tilde{k}>0, d\in\RR$ and $(m(n))_{n\geq0}$ be a regular sequence of moments of order $\frac{1}{\tilde{k}}$. W assume that $\hat{x}(\varepsilon, z) \in \EE[[\varepsilon]]$ is a formal solution of the equation
 \begin{equation*}
 \label{eq:3a}
 P\left(\varepsilon^k \partial _{m, z}\right)x(\varepsilon, z)=f(z),
 \end{equation*}
 where $P(\zeta)$ is any polynomial of degree $q$ with complex coefficients, such that $P(0) \neq 0$. We also assume that  $a_{1}, \dots , a_{l}\in\CC$ are the roots of the equation $P(\zeta)=0$  with multiplicity respectively  $p_{1}, \dots , p_{l}$, where $\sum _{i=1} ^{l} p_{i} = q$, $f(z)$ is holomorphic in a complex neighbourhood of the origin and $d\in\RR$.
 
 Then $f$ can be continued analytically in some sectors in directions $kd-\arg a_{i}$ for $i=1, \dots ,l$ and its continuation is of exponential growth of order $\tilde{k}$ if and only if $\hat{x}(\varepsilon, z)$ is $k \tilde{k}$-summable in the direction $d$. 
\end{Thm}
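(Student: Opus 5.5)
The plan is to follow the proof of Theorem \ref{th1} step by step, with $\frac{d}{dz}$ replaced by $\partial_{m,z}$ and Lemma \ref{le:2} replaced by Lemma \ref{le:5}.

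\textbf{Direction $(\Longleftarrow)$.} Assume $\hat{x}$ is $k\tilde{k}$-summable in the direction $d$. Fix $j\in\{1,\dots,l\}$ and factor $P(\zeta)=(\zeta-a_j)\tilde{P}_j(\zeta)$. Put $y_j:=\tilde{P}_j(\varepsilon^k\partial_{m,z})\hat{x}$. By Lemma \ref{le:3}, applied with summability index $k\tilde{k}$, the series $y_j$ is again $k\tilde{k}$-summable in the direction $d$. Since $\tilde{P}_j(\varepsilon^k\partial_{m,z})$ commutes with $\varepsilon^k\partial_{m,z}$, the series $y_j$ solves $(\varepsilon^k\partial_{m,z}-a_j)y_j=f$. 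The implication ``$\Leftarrow$'' of Lemma \ref{le:5} then shows that $f\in\Oo^{\tilde{k}}(\hat{S}_{kd-\arg a_j})$. As $j$ was arbitrary, this gives the claim.

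\textbf{Direction $(\Longrightarrow)$.} First solve the recursion. Write $\hat{x}=\sum_n x_n(z)\varepsilon^{kn}$ and $P(\zeta)=\sum_{i=0}^q c_i\zeta^i$. Comparing coefficients gives $x_n=b_n\partial_{m,z}^n f$, where $(b_n)$ solves exactly the same difference equation \eqref{eq:2_2} as before, with $b_0=1/c_0$. Hence $b_n$ has the form \eqref{eq:b_n}.

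The step that needs care is the next one. In Theorem \ref{th1} we used $\frac{n^{\underline{j}}}{n!}=\frac{1}{(n-j)!}$ and Taylor's formula. With moments, $n^{\underline{j}}/m(n)$ does not simplify in the same way. I would avoid this difficulty by decomposing $\hat{x}$ directly. Partial fractions give
$$\frac{1}{P(\zeta)}=\sum_{i=1}^{l}\sum_{s=1}^{p_i}\frac{\gamma_{i,s}}{(\zeta-a_i)^s}$$
for some constants $\gamma_{i,s}$. Correspondingly,
$$\hat{x}=\sum_{i=1}^{l}\sum_{s=1}^{p_i}\gamma_{i,s}\,\hat{x}_{i,s},$$
where $\hat{x}_{i,s}$ is the unique formal solution of $(\varepsilon^k\partial_{m,z}-a_i)^s\hat{x}_{i,s}=f$. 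This decomposition is legitimate: all the operators are polynomials in $T=\varepsilon^k\partial_{m,z}$, and formal series in $T$ act on $f$ because each coefficient of $\varepsilon^{kn}$ involves only finitely many terms.

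It therefore suffices to show that each $\hat{x}_{i,s}$ is $k\tilde{k}$-summable in the direction $d$. I would prove this by induction on $s$. For $s=1$ this is Lemma \ref{le:5}, using $f\in\Oo^{\tilde{k}}(\hat{S}_{kd-\arg a_i})$. For the step, write $(T-a_i)\hat{x}_{i,s}=\hat{x}_{i,s-1}$. The right-hand side now depends on $\varepsilon$, so Lemma \ref{le:5} does not apply directly.

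Instead I would use the explicit formula. Expanding $(T-a_i)^{-s}$ as a formal series in $T$ gives
$$\hat{x}_{i,s}=(-a_i)^{-s}\sum_n\binom{n+s-1}{s-1}a_i^{-n}\,\partial_{m,z}^nf\,\varepsilon^{kn}.$$
The binomial coefficient is a polynomial in $n$ of degree $s-1$. Under $\hat{\Bo}_{\tilde{m},\varepsilon}$, multiplying the $n$-th coefficient by $n$ corresponds to applying the operator $\frac1k\varepsilon\partial_\varepsilon$. Hence
$$\hat{\Bo}_{\tilde{m},\varepsilon}\hat{x}_{i,s}=R_s(\varepsilon\partial_\varepsilon)\,\tilde{y}_i$$
for some polynomial $R_s$, where $\tilde{y}_i$ is the function in \eqref{eq:form_y} with $a=a_i$. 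By Remark \ref{re:5}, $(\varepsilon\partial_\varepsilon)^j\tilde{y}_i\in\Oo^{k\tilde{k}}(\hat{S}_d\times D)$ for all $j$. So each $\hat{x}_{i,s}$ is $k\tilde{k}$-summable in the direction $d$, and then so is the finite linear combination $\hat{x}$.

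The main obstacle I anticipate is the justification of Remark \ref{re:5} in our setting. One has to check that applying $\varepsilon\partial_\varepsilon$ preserves exponential growth of order $k\tilde{k}$ on slightly smaller disc-sectors. This follows from the Cauchy estimates on discs of radius proportional to $|\varepsilon|$, which lose only a constant factor in the exponent. One also has to check that the hypothesis on $f$ matches Lemma \ref{le:5}, namely growth of order $\tilde{k}$.
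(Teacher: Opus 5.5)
Your proposal is correct and follows essentially the paper's route. The converse direction is exactly the paper's: Lemma \ref{le:3} together with Lemma \ref{le:5}, applied to $y_j=\tilde{P}_j(\varepsilon^k\partial_{m,z})\hat{x}$. For the direct implication, the paper likewise writes the coefficients as $\sum_{i,j}\tilde{d}_{i,j}n^j a_i^{-n}\partial_{m,z}^n f$, turns $n^j$ into $(\varepsilon\partial_\varepsilon/k)^j$ under the $\tilde{m}$-Borel transform with $\tilde{m}(n)=m(n/k)$, and concludes by Remark \ref{re:5}. Your partial-fraction decomposition with weights $\binom{n+s-1}{s-1}$ is just an explicit way of obtaining the same quasi-polynomial form of $b_n$. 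You are also right that the hypothesis actually needed on $f$ in Remark \ref{re:5} is exponential growth of order $\tilde{k}$.
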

\begin{proof}
    The proof of this theorem is similar to the proof of Theorem \ref{th1}.
    
    $(\Longrightarrow)$ If $b_{n}$ is given by (\ref{eq:b_n}), one can find coefficients $\tilde{d}_{i,j}$ which depend on $d_{i,j}$ such that 
    $$
    b_{n} = \displaystyle \sum _{i=1} ^{l} \displaystyle \sum _{j=0} ^{p_{i}-1} \tilde{d}_{i,j} n^{j} \left(\frac{1}{a_{i}}\right)^{n}.
    $$
    Then we can observe that
    $$\hat{x}(\varepsilon, z) = \displaystyle \sum _{n=0} ^{\infty} \left( \displaystyle \sum _{i=1} ^{l} \displaystyle \sum _{j=0} ^{p_{i} -1} d_{i,j} n^{j} \left( \frac{1}{a_{i}}\right) ^{n} \right) \partial _{m,z} ^{n} f(z) \varepsilon ^{kn}.
    $$
    Therefore $\tilde{m}$-Borel transform of $\hat{x}(\varepsilon, z)$ for $\tilde{m}(n):=m(\frac{n}{k})$ can be written in the following form:
    \begin{equation}\label{eq:bo_tilde}
    \hat{\Bo}_{\tilde{m},\varepsilon} \hat{x}(\varepsilon, z) = \displaystyle \sum _{n=0} ^{\infty} \frac{1}{\tilde{m}(kn)}\displaystyle \sum _{i=1} ^{l} \displaystyle \sum _{j=0} ^{p_{i} -1} d_{i,j} n^{j} \left( \frac{1}{a_{i}}\right) ^{n} \partial _{m,z} ^{n} f(z) \varepsilon ^{kn}.
    \end{equation}
    Since $m=(m(n))_{n\in\NN_0}$ has order $\tilde{k}$, we see that $\tilde{m}=(\tilde{m}(n))_{n\in\NN_0}=(m(\frac{n}{k}))_{n\in\NN_0}$ has order $k\tilde{k}$.
     Moreover, as in the proof of Lemma \ref{le:5} (see formula \eqref{eq:form_y}) and by Remark \ref{re:5} we can observe that for any $i=1,\dots,l$ and $j=0,\dots,p_i-1$ the function
     \begin{equation*}
     (\varepsilon,z)\longmapsto \displaystyle \frac{d_{i, j}}{k^{j}} \left(\varepsilon \partial _{\varepsilon}\right)^{j} \displaystyle \sum _{n=0} ^{\infty} \frac{\left(\frac{\varepsilon ^{k}}{a_{i}}\right)^{n} \partial ^{n} _{m,z} f(z)}{m(n)}
     \end{equation*}
     belongs to the space $\Oo^{k\tilde{k}}(\hat{S}_d\times D)$.
     By \eqref{eq:bo_tilde} we conclude that $\hat{x}(\varepsilon, z)$ is $k\tilde{k}$-summable in the direction $d$.

     $(\Longleftarrow)$ This proof is analogous to the proof of Theorem~\ref{th1}, but instead of Lemma~\ref{le:1}, we use Lemma~\ref{le:3} and instead of Lemma~\ref{le:2}, we use Lemma~\ref{le:5}.
\end{proof}

\section{Relation to moment partial differential equations in two variables}
In this section we study more general singular perturbation problems and their generalisations to moment derivatives. Namely we consider singularly perturbed equations 
\begin{equation}
\label{eq:singular}
    P\left(\varepsilon, z, \frac{d}{dz}\right) \hat{x}(\varepsilon,z) = \hat{f}(\varepsilon,z),
\end{equation}  
where $P(\xi,z,\zeta)\in\EE[\xi,\zeta]$ is a polynomial of two variables $(\xi,\zeta)$ with coefficients in the Banach space $\EE$ dependent on $z$, and $\hat{f}(\varepsilon,z)=\sum_{n=0}^{\infty}f_n(z)\varepsilon^n\in\EE[[\varepsilon]]$.

We also consider the moment versions of the equations (\ref{eq:singular}), where the differentiation $\frac{d}{dz}$ is replaced by the moment differentiation $\partial_{m,z}$, where $m=(m(n))_{n\geq 0}$ and $m(u)$ is a function of moments. 

We find the connection between the Borel transform of the formal solution of the singular perturbation problem and the formal solution of the initial problem for the appropriate partial differential equation in two complex variables.

To this end we extend the definition of Borel transform $\hat{\Bo}_{1,\varepsilon}: \mathbb{E}[[\varepsilon]] \rightarrow \mathbb{E}[[\varepsilon]]$ 
to the space of formal Laurent series $\mathbb{E}[[\varepsilon, \varepsilon ^{-1}]]$ as follows:
\begin{Def}
    The operator $\hat{\bar{\Bo}} _{1, \varepsilon}: \mathbb{E}[[\varepsilon, \varepsilon ^{-1}]] \rightarrow \mathbb{E}[[\varepsilon]]$ defined by 
\begin{equation}\label{eq:B_ext}
    \hat{\bar{\Bo}} _{1, \varepsilon} \Big( \sum _{n= -\infty} ^{+ \infty} u_{n} \varepsilon ^{n}\Big) := \sum _{n=0} ^{\infty} \frac{u_{n}}{n!} \varepsilon ^{n}
\end{equation}
is called the \emph{extended Borel transform}.
\end{Def}

\begin{Rem}\label{re:1}
   Directly by the definition we conclude that the extended Borel transform $\hat{\bar{\Bo}} _{1, \varepsilon}$ has the following properties:
    \begin{enumerate}
        \item $\hat{\bar{\Bo}} _{1, \varepsilon}$ is a linear operator on $\mathbb{E}[[\varepsilon, \varepsilon ^{-1}]]$, i.e.
        \begin{equation*}
        \hat{\bar{\Bo}} _{1, \varepsilon}(\alpha u_{1} + \beta u_{2}) = \alpha \hat{\bar{\Bo}} _{1, \varepsilon} u_{1} + \beta \hat{\bar{\Bo}} _{1, \varepsilon} u_{2}
        \ \text{for every}\ u_1,u_2\in\EE[[\varepsilon,\varepsilon^{-1}]]\ \text{and}\ \alpha,\beta\in\CC.
        \end{equation*}
        \item $\hat{\bar{\Bo}} _{1, \varepsilon}$ is an extension of the Borel transform $\hat{{\Bo}} _{1, \varepsilon}$, i.e. 
        \begin{equation*}
        \hat{\bar{\Bo}} _{1, \varepsilon} (u) = \hat{\Bo} _{1, \varepsilon} (u) \quad\text{for every}\quad u \in \mathbb{E}[[\varepsilon]].
        \end{equation*}
    \end{enumerate}
\end{Rem}

Using the operator $\hat{\bar{\Bo}} _{1, \varepsilon}$ we show the following important property of the Borel transform 
\begin{Lem}\label{le:6}
    For every $\hat{u} \in \mathbb{E}[[\varepsilon, \varepsilon ^{-1}]]$ holds
\begin{equation}\label{eq:prop_B}
\hat{\bar{\Bo}} _{1, \varepsilon} \Big(\varepsilon ^{-1} \hat{u}(\varepsilon)\Big) = \partial _{\varepsilon} \Big(\hat{\bar{\Bo}} _{1, \varepsilon}  \hat{u}(\varepsilon)\Big).
\end{equation}
\end{Lem}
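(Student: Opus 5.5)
The plan is to verify \eqref{eq:prop_B} coefficientwise, using only the definition \eqref{eq:B_ext} of the extended Borel transform. Both sides are linear in $\hat{u}$ by Remark \ref{re:1}, and both are built termwise from the coefficients. It is therefore enough to compare the coefficients of $\varepsilon^{n}$, $n\ge 0$, on each side for a general Laurent series $\hat{u}(\varepsilon)=\sum_{n=-\infty}^{+\infty}u_n\varepsilon^n$.

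First I compute the left-hand side. Multiplying by $\varepsilon^{-1}$ shifts indices, so
\[
\varepsilon^{-1}\hat{u}(\varepsilon)=\sum_{n=-\infty}^{+\infty}u_{n}\varepsilon^{n-1}=\sum_{n=-\infty}^{+\infty}u_{n+1}\varepsilon^{n}.
\]
Applying \eqref{eq:B_ext} gives
\[
\hat{\bar{\Bo}}_{1,\varepsilon}\bigl(\varepsilon^{-1}\hat{u}\bigr)=\sum_{n=0}^{\infty}\frac{u_{n+1}}{n!}\varepsilon^{n}.
\]
Here the multiplication by $\varepsilon^{-1}$ is well defined on $\EE[[\varepsilon,\varepsilon^{-1}]]$ because it is just a reindexing of the coefficient sequence.

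Next I compute the right-hand side. By \eqref{eq:B_ext}, $\hat{\bar{\Bo}}_{1,\varepsilon}\hat{u}=\sum_{n=0}^{\infty}\frac{u_n}{n!}\varepsilon^n$. Formal termwise differentiation kills the $n=0$ term, and $\frac{n}{n!}=\frac{1}{(n-1)!}$ gives
\[
\partial_{\varepsilon}\sum_{n=0}^{\infty}\frac{u_n}{n!}\varepsilon^n=\sum_{n=1}^{\infty}\frac{u_n}{(n-1)!}\varepsilon^{n-1}=\sum_{n=0}^{\infty}\frac{u_{n+1}}{n!}\varepsilon^{n}.
\]
This agrees with the left-hand side term by term.

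There is no real obstacle here. The only point needing care is the boundary index. On the left, the coefficient $u_0$ (that of $\varepsilon^{0}$ in $\hat u$) moves to $\varepsilon^{-1}$ and is discarded by the truncation in \eqref{eq:B_ext}. On the right, it is annihilated by differentiating the constant term. The negative-index coefficients are discarded on both sides. I will state explicitly that $\partial_\varepsilon$ is understood as formal termwise differentiation on $\EE[[\varepsilon]]$, which makes the identity hold without any convergence assumptions.
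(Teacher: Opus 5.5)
Your proof is correct and is essentially the paper's argument. The paper reduces by linearity to monomials $u_n\varepsilon^n$ and checks the cases $n<0$, $n=0$, $n>0$ separately, which amounts to your coefficientwise comparison with the boundary index $n=0$ handled the same way; working with the whole series at once, as you do, also avoids having to justify that linearity extends to infinite sums.
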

\begin{proof}
    By the linearity of $\hat{\bar{\Bo}} _{1, \varepsilon}$ and of $\partial _{\varepsilon}$ it is sufficient to show that \eqref{eq:prop_B} holds for $\hat{u}(\varepsilon)= u_{n} \varepsilon ^{n}$ for any fixed $n \in \mathbb{Z}$. We consider 3 cases:
    \begin{itemize}
    \item If $n < 0$ then by \eqref{eq:B_ext} we see that $\hat{\bar{\Bo}} _{1, \varepsilon} (u_{n} \varepsilon ^{n-1}) = 0$ and analogously\\ $\partial _{\varepsilon} \Big(\hat{\bar{\Bo}} _{1, \varepsilon} (u_{n} \varepsilon ^{n})\Big) = \partial _{\varepsilon} 0 = 0$.
    \item If $n=0$ then  similarly by \eqref{eq:B_ext} we get $\hat{\bar{\Bo}} _{1, \varepsilon} (u_{0} \varepsilon ^{-1}) = 0$ and $\partial _{\varepsilon} \Big(\hat{\bar{\Bo}} _{1, \varepsilon} (u_{0} \varepsilon ^{0})\Big) = \partial _{\varepsilon} u_{0} = 0$.
    \item If $n>0$ then we have (see also \cite[Lemma 5.1]{lamisu})
    \begin{equation*}
    \hat{\bar{\Bo}} _{1, \varepsilon} (u_{n} \varepsilon ^{n-1}) = \hat{\Bo} _{1, \varepsilon} (u_{n} \varepsilon ^{n-1}) = \frac{u_{n}}{(n-1)!} \varepsilon ^{n-1}
    \end{equation*}
    and
    \begin{equation*}
    \partial _{\varepsilon} \Big(\hat{\bar{\Bo}} _{1, \varepsilon} (u_{n} \varepsilon ^{n})\Big) = \partial _{\varepsilon} \Big(\hat{\Bo} _{1, \varepsilon} (u_{n} \varepsilon ^{n})\Big) = \partial _{\varepsilon} \Big(\frac{u_{n}}{n!} \varepsilon ^{n}\Big) = \frac{u_{n}}{(n-1)!} \varepsilon ^{n-1}.
    \end{equation*}
\end{itemize}
\end{proof}

\begin{Rem}
The above fundamental property of the Borel transform is widely used in the context of WKB analysis (see for example \cite[Proposition 2.6]{ta}), singular perturbations (see for example \cite[Lemma 5.1]{lamisu}) 
or singular partial differential equations (see for example \cite[page 221]{hi}). 
\end{Rem}

Applying Lemma \ref{le:6} $k$ times we get
\begin{Prop}\label{prop:1}
  For every $\hat{u} \in \mathbb{E}[[\varepsilon, \varepsilon ^{-1}]]$ and every $k\in\NN$ we have
\begin{equation*}
\hat{\bar{\Bo}} _{1, \varepsilon} \Big(\varepsilon ^{-k} \hat{u}(\varepsilon)\Big) = \partial _{\varepsilon}^k \Big(\hat{\bar{\Bo}} _{1, \varepsilon}  \hat{u}(\varepsilon)\Big).
\end{equation*}
\end{Prop}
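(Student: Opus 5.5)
The plan is induction on $k$, with Lemma \ref{le:6} as the only tool. For $k=1$ the statement is exactly \eqref{eq:prop_B}, so there is nothing to prove.

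For the inductive step, assume the identity holds for some $k\in\NN$ and every $\hat{u}\in\EE[[\varepsilon,\varepsilon^{-1}]]$. Fix $\hat{u}$ and put $\hat{v}(\varepsilon):=\varepsilon^{-k}\hat{u}(\varepsilon)$. This is again a formal Laurent series in $\EE[[\varepsilon,\varepsilon^{-1}]]$, since multiplying by $\varepsilon^{-k}$ only shifts indices. Then $\varepsilon^{-(k+1)}\hat{u}=\varepsilon^{-1}\hat{v}$, and Lemma \ref{le:6} applied to $\hat{v}$ gives
\begin{equation*}
\hat{\bar{\Bo}}_{1,\varepsilon}\big(\varepsilon^{-(k+1)}\hat{u}\big)=\partial_\varepsilon\big(\hat{\bar{\Bo}}_{1,\varepsilon}\hat{v}\big).
\end{equation*}
By the inductive hypothesis, $\hat{\bar{\Bo}}_{1,\varepsilon}\hat{v}=\partial_\varepsilon^k\big(\hat{\bar{\Bo}}_{1,\varepsilon}\hat{u}\big)$. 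Substituting this yields $\partial_\varepsilon^{k+1}\big(\hat{\bar{\Bo}}_{1,\varepsilon}\hat{u}\big)$, which is the claim for $k+1$.

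The one point to check is that each quantity is well defined at the formal level. The operator $\hat{\bar{\Bo}}_{1,\varepsilon}$ maps $\EE[[\varepsilon,\varepsilon^{-1}]]$ into $\EE[[\varepsilon]]$, where $\partial_\varepsilon$ acts termwise. The product $\varepsilon^{-k}\hat{u}$ is just an index shift of a two-sided sequence, so it raises no convergence issues. With that observed, the argument has no real obstacle.

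As a cross-check, one could verify the identity directly on monomials $u_n\varepsilon^n$, exactly as in the proof of Lemma \ref{le:6}. For $n<k$ both sides vanish. For $n\ge k$ both sides equal $\frac{u_n}{(n-k)!}\varepsilon^{n-k}$, which is $\partial_\varepsilon^k\big(\frac{u_n}{n!}\varepsilon^n\big)$. Linearity (Remark \ref{re:1}) then extends this to all of $\EE[[\varepsilon,\varepsilon^{-1}]]$, since both operators act coefficientwise.
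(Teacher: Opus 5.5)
Your proposal is correct and takes the same approach as the paper, which obtains the proposition by applying Lemma~\ref{le:6} $k$ times; your induction on $k$ is simply that iteration written out. Your monomial cross-check is also valid, and your remark that both operators act coefficientwise is the right justification for passing from monomials to arbitrary two-sided formal series, where finite linearity alone would not suffice.
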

\medskip\par
Analogously, using the classical Borel transform $\hat{\Bo}_{1,\varepsilon}$ instead of the extended one $\hat{\bar{\Bo}} _{1, \varepsilon}$ we receive
\begin{Prop}\label{prop:2}
For every $\hat{u} \in \mathbb{E}[[\varepsilon]]$ and every $k\in\NN$ we conclude that
\begin{equation*}
\hat{\Bo} _{1, \varepsilon} \Big(\varepsilon^k \hat{u}(\varepsilon)\Big) = \partial _{\varepsilon}^{-k} \Big(\hat{\Bo} _{1, \varepsilon}  \hat{u}(\varepsilon)\Big),
\end{equation*}
where $\partial _{\varepsilon}^{-1}$ denotes antiderivative, i.e. $\partial _{\varepsilon}^{-1}\varphi(\varepsilon)=\int_0^{\varepsilon}\varphi(t)\,dt$.
\end{Prop}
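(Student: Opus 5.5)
By linearity of $\hat{\Bo}_{1,\varepsilon}$ and of the antiderivative $\partial_{\varepsilon}^{-1}$, and by the way both act coefficientwise on formal power series, it suffices to verify the identity on monomials $\hat{u}(\varepsilon)=u_n\varepsilon^n$ with $n\in\NN_0$ and $u_n\in\EE$. I would first treat the case $k=1$ and then obtain general $k$ by induction, applying the case $k=1$ repeatedly. This mirrors how Proposition~\ref{prop:1} follows from Lemma~\ref{le:6}.

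For $k=1$ and a monomial, the left-hand side is $\hat{\Bo}_{1,\varepsilon}(u_n\varepsilon^{n+1})=\frac{u_n}{(n+1)!}\varepsilon^{n+1}$. On the right-hand side, $\hat{\Bo}_{1,\varepsilon}(u_n\varepsilon^n)=\frac{u_n}{n!}\varepsilon^n$, and integrating from $0$ to $\varepsilon$ gives $\frac{u_n}{n!}\cdot\frac{\varepsilon^{n+1}}{n+1}=\frac{u_n}{(n+1)!}\varepsilon^{n+1}$. The two sides agree.

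For the general case, I would write
\[
\hat{\Bo}_{1,\varepsilon}(\varepsilon^{k}\hat u)=\hat{\Bo}_{1,\varepsilon}\bigl(\varepsilon\cdot\varepsilon^{k-1}\hat u\bigr)=\partial_\varepsilon^{-1}\hat{\Bo}_{1,\varepsilon}(\varepsilon^{k-1}\hat u)
\]
and then apply the induction hypothesis to the last term. Here $\partial_\varepsilon^{-k}$ is understood as the $k$-fold iterate of $\partial_\varepsilon^{-1}$.

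The only real subtlety is making sense of $\partial_\varepsilon^{-1}$ on formal series. I would define it termwise, $\partial_\varepsilon^{-1}\sum_n v_n\varepsilon^n=\sum_n \frac{v_n}{n+1}\varepsilon^{n+1}$. This is well defined and linear on $\EE[[\varepsilon]]$, and it agrees with $\int_0^\varepsilon$ whenever the series converges. With this definition, the reduction to monomials is legitimate, because both sides of the identity are computed coefficient by coefficient. I do not expect any step to be a genuine obstacle.
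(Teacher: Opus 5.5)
Your proof is correct and follows the same route the paper intends. The paper gives no separate proof; it says the statement follows ``analogously'' to Lemma~\ref{le:6} and Proposition~\ref{prop:1}, that is, by a coefficientwise check on monomials $u_n\varepsilon^n$ followed by iteration in $k$, which is exactly your argument (and your termwise definition of $\partial_\varepsilon^{-1}$ on $\EE[[\varepsilon]]$ makes explicit a point the paper leaves implicit).
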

\medskip\par
We apply Lemma \ref{le:6} to show that $\hat{x}(z,\varepsilon)$ is a formal power series solution of the singular perturbation problem  if and only if its Borel transform $\hat{\Bo}_{1,\varepsilon}\hat{x}(z,\varepsilon)$ is a formal power series solution of the appropriate Cauchy problem. Namely, we have 
\begin{Thm}\label{thm:3}
    We assume that
    \begin{equation}\label{eq:P}
    P(\xi,z,\zeta) = P_{0} (z,\zeta) + P_{1} (z,\zeta) \xi + \dots + P_{p} (z,\zeta) \xi^{p}\quad\text{for}\quad z\in D,\quad \xi,\zeta\in\CC
    \end{equation}
    is a polynomial of two variables $(\xi,\zeta)$ with coefficients in $\EE$ dependent of $z$ and of order $p$ with respect to $\xi$, and let $\hat{f} (\varepsilon,z) = \sum _{n=0} ^{\infty} f_{n}(z) \varepsilon ^{n} \in \mathbb{E}[[\varepsilon]]$. 
    
    Then $\hat{x} (\varepsilon,z) = \sum _{n=0} ^{\infty} x_{n}(z) \varepsilon ^{n} \in \mathbb{E}[[\varepsilon]]$ is a formal power series solution of a singularly perturbed equation
\begin{equation}\label{eq:sing_pert}
    P\left(\varepsilon,z, \frac{d}{dz}\right) \hat{x}(\varepsilon,z) = \hat{f}(\varepsilon,z)
\end{equation}    
    if and only if its Borel transform $\hat{y}(\varepsilon,z) := \hat{\Bo} _{1, \varepsilon} \hat{x} (\varepsilon,z)$ is a formal solution of the Cauchy problem
    \begin{equation}\label{eq:partial}
 \left \{
        \begin{array}{l}
              Q (\partial _{\varepsilon},z, \partial _{z}) \hat{y} (\varepsilon,y)    = \hat{F} (\varepsilon,y)\\
             \partial _{\varepsilon} ^{n} \hat{y} (0, z)   = x_{n} (z)\quad \text{for}\quad n = 0, 1, \dots, p-1,
        \end{array}
       \right. 
\end{equation}
where $Q(\chi,z,\zeta):=\chi^pP(\chi^{-1},z,\zeta)$ (i.e.
$Q (\partial_{\varepsilon},z,\partial_z)= \partial_{\varepsilon}^{p} P(\partial_{\varepsilon}^{-1},z,\partial_z)$), 
$\hat{F} (\varepsilon,z):= \partial _{\varepsilon} ^{p} \hat{\Bo} _{1, \varepsilon} \hat{f} (\varepsilon,z)$ 
    and the functions $x_{0} (z), \dots, x_{p-1}(z)$ satisfy the equations:
\begin{equation*}
    P_n\left(z,\frac{d}{dz}\right)x_0(z)+P_{n-1}\left(z,\frac{d}{dz}\right)x_1(z)+\dots+P_0\left(z,\frac{d}{dz}\right)x_n(z)=f_n(z)
\end{equation*}
for $n=0,\dots,p-1$.
\end{Thm}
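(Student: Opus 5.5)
Multiplying by $\varepsilon^{-p}$ turns the equation into one in $\varepsilon^{-1}$, which the extended Borel transform converts into $\partial_\varepsilon$ via Proposition \ref{prop:1}; the initial conditions come from comparing low-order coefficients. The key point is that $\hat{\bar{\Bo}}_{1,\varepsilon}$ is blind to negative powers of $\varepsilon$ and, by Remark \ref{re:1}, agrees with $\hat{\Bo}_{1,\varepsilon}$ on $\EE[[\varepsilon]]$.

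\textbf{Step 1: initial conditions.} Write (\ref{eq:sing_pert}) coefficientwise. Comparing the coefficients of $\varepsilon^n$ gives
\begin{equation*}
\sum_{j=0}^{\min(n,p)}P_j\left(z,\tfrac{d}{dz}\right)x_{n-j}(z)=f_n(z)\quad\text{for}\quad n\in\NN_0 .
\end{equation*}
For $n=0,\dots,p-1$ these are exactly the equations required of $x_0,\dots,x_{p-1}$ in the statement. Moreover $\partial_\varepsilon^n\hat{y}(0,z)=x_n(z)$, since $\hat{y}=\sum x_n\varepsilon^n/n!$. Hence the side conditions in (\ref{eq:partial}) hold automatically for any $\hat{x}$.

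\textbf{Step 2: the Borel-transformed equation.} Multiply (\ref{eq:sing_pert}) by $\varepsilon^{-p}$ to obtain the Laurent identity
\begin{equation*}
\sum_{j=0}^{p}\varepsilon^{j-p}P_j\left(z,\tfrac{d}{dz}\right)\hat{x}=\varepsilon^{-p}\hat{f}.
\end{equation*}
Apply $\hat{\bar{\Bo}}_{1,\varepsilon}$, which is linear by Remark \ref{re:1}. It commutes with $P_j(z,\frac{d}{dz})$ because that operator acts only on the coefficients. By Proposition \ref{prop:1} the left side becomes $\sum_j\partial_\varepsilon^{p-j}P_j(z,\partial_z)\hat{y}=Q(\partial_\varepsilon,z,\partial_z)\hat{y}$, and the right side becomes $\partial_\varepsilon^p\hat{\Bo}_{1,\varepsilon}\hat{f}=\hat{F}$. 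So a solution $\hat{x}$ yields a solution $\hat{y}$ of (\ref{eq:partial}).

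\textbf{Step 3: the converse.} Here lies the main obstacle: the kernel of $\hat{\bar{\Bo}}_{1,\varepsilon}$ contains every negative power. Applying it to $\varepsilon^{-p}\cdot(\text{equation})$ therefore only recovers the coefficients of $\varepsilon^n$ for $n\ge p$, because $\varepsilon^{n-p}$ survives exactly when $n\ge p$. To handle this, let $\hat{y}$ satisfy (\ref{eq:partial}) and set $\hat{x}:=\hat{\Bo}_{1,\varepsilon}^{-1}\hat{y}$, which is well defined since $\hat{\Bo}_{1,\varepsilon}$ is a bijection of $\EE[[\varepsilon]]$. Let $\hat{R}:=P(\varepsilon,z,\frac{d}{dz})\hat{x}-\hat{f}=\sum_n r_n(z)\varepsilon^n$. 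The computation of Step 2, read backwards, gives
\begin{equation*}
0=Q\hat{y}-\hat{F}=\hat{\bar{\Bo}}_{1,\varepsilon}(\varepsilon^{-p}\hat{R})=\sum_{n\ge0}\frac{r_{n+p}}{n!}\varepsilon^n,
\end{equation*}
so $r_n=0$ for all $n\ge p$. For $n<p$, the initial data $x_n$ satisfy the stated equations, which are exactly $r_n=0$. Hence $\hat{R}=0$ and $\hat{x}$ solves (\ref{eq:sing_pert}).
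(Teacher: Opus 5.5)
Your proof is correct and follows essentially the same route as the paper: you multiply by $\varepsilon^{-p}$ and apply the extended Borel transform via Lemma~\ref{le:6}/Proposition~\ref{prop:1} and Remark~\ref{re:1} for the forward direction. For the converse, you recover the coefficient identities for $n\ge p$ from the transformed equation and those for $n<p$ from the hypotheses on $x_0,\dots,x_{p-1}$, and your residual formulation $\hat{\bar{\Bo}}_{1,\varepsilon}(\varepsilon^{-p}\hat{R})=0$ is simply a tidier packaging of the paper's converse argument.
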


\begin{proof}
    $(\Longrightarrow)$ Let $\hat{x}(\varepsilon,z)$ satisfies \eqref{eq:sing_pert}. It means that $\varepsilon ^{-p}P\left(\varepsilon,z, \frac{d}{dz}\right) \hat{x}(\varepsilon,z)=\varepsilon ^{-p} \hat{f}(\varepsilon,z)$ belongs to the space $\mathbb{E}[[\varepsilon, \varepsilon ^{-1}]]$. Hence 
    $$Q\left(\frac{1}{\varepsilon},z, \frac{d}{dz}\right) \hat{x}(\varepsilon,z) = \varepsilon ^{-p} \hat{f}(\varepsilon,z).
    $$
    Applying the extended Borel transform $\hat{\bar{\Bo}}_{1,\varepsilon}$ to both sides of the above equality and using Lemma \ref{le:6} we conclude that 
    $$
    Q\left(\partial _{\varepsilon},z, \partial _{z}\right) \hat{\bar{\Bo}}_{1,\varepsilon} \hat{x}(\varepsilon,z) = \partial _{\varepsilon} ^{p} \hat{\bar{\Bo}}_{1,\varepsilon} \hat{f}(\varepsilon,z).
    $$
    
    Since $\hat{x}(\varepsilon,z)$ and $\hat{f}(\varepsilon,z)$ belong to $\mathbb{E}[[\varepsilon]]$, by Remark \ref{re:1} we get 
    $$Q(\partial _{\varepsilon},z, \partial _{z}) \hat{y}(\varepsilon,z) = \hat{F}(\varepsilon,z),$$
    where $\hat{y}(\varepsilon,z) = \hat{\Bo}_{1,\varepsilon} \hat{x}(\varepsilon,z) = \sum _{n=0} ^{\infty} \frac{x_{n}(z)}{n!}\varepsilon ^{n}$ and $\hat{F}(\varepsilon,z) = \partial _{\varepsilon} ^{p} \hat{\Bo} _{1,\varepsilon} \hat{f}(\varepsilon,z)$.
    
    Moreover by the Taylor formula $$\hat{y}(\varepsilon,z) = \sum _{n=0} ^{\infty} \frac{\partial _{\varepsilon} ^{n} \hat{y}(0,z)}{n!}\varepsilon ^{n},$$ hence we get the initial conditions 
    \begin{equation*}
    \partial _{\varepsilon} ^{n} \hat{y}(0,z) = x_{n}(z)\quad \text{for}\quad n=0,1, \dots, p-1.
    \end{equation*}
    
    $(\Longleftarrow)$ We have
    $$P\left(\varepsilon,z, \frac{d}{dz}\right) = P_{0}\left(z,\frac{d}{dz}\right) + P_{1}\left(z,\frac{d}{dz}\right)\varepsilon + \dots + P_{p}\left(z,\frac{d}{dz}\right)\varepsilon ^{p}.$$
Hence we may write
\begin{multline*}
P\left(\varepsilon,z, \frac{d}{dz}\right)\Big(\sum _{n=0} ^{\infty} x_{n}(z) \varepsilon ^{n}\Big) = P_{0}\left(z,\frac{d}{dz}\right)x_{0}(z)\\ + \Big(P_{0}\left(z,\frac{d}{dz}\right)x_{1}(z) + P_{1}\left(z,\frac{d}{dz}\right)x_{0}(z)\Big)\varepsilon +\dots\\
+ \Big(P_{0}\left(z,\frac{d}{dz}\right)x_{n}(z) + P_{1}\left(z,\frac{d}{dz}\right)x_{n-1}(z) + \dots + P_{p}\left(z,\frac{d}{dz}\right)x_{n-p}(z)\Big)\varepsilon ^{n} + \dots.
\end{multline*}

    It means that the sequence $(x_{n}(z)) _{n \geq 0}$ of coefficients for $\hat{x}(\varepsilon,z)=\sum_{n=0}^{\infty}x_n(z)\varepsilon^n$ being the solution of \eqref{eq:sing_pert} have to satisfy the following equations:
\begin{equation}\label{eq:5}
  P_{0} \left(z,\frac{d}{dz}\right) x_{n} (z) + P_{1} \left(z,\frac{d}{dz}\right) x_{n-1} (z) + \dots + P_{n} \left(z,\frac{d}{dz}\right) x_{0} (z) = f_{n}(z)\quad \text{for} \quad n \leq p,  
\end{equation}
\begin{equation}\label{eq:6}
  P_{0} \left(z,\frac{d}{dz}\right) x_{n} (z) + P_{1} \left(z,\frac{d}{dz}\right) x_{n-1} (z) + \dots + P_{p} \left(z,\frac{d}{dz}\right) x_{n-p} (z) = f_{n}(z)\quad \text{for} \quad n \geq p.  
\end{equation}

On the other hand, by \eqref{eq:partial} we see that 
$$Q(\partial _{\varepsilon},z, \partial _{z}) \hat{\bar{\Bo}}_{1,\varepsilon} \hat{x}(\varepsilon,z) = \partial _{\varepsilon} ^{p} \hat{\bar{\Bo}}_{1,\varepsilon} f(\varepsilon,z).
    $$

    By Lemma \ref{le:6} we get 
    $$\hat{\bar{\Bo}}_{1,\varepsilon} \Big(Q\left(\frac{1}{\varepsilon},z, \frac{d}{dz}\right) \hat{x}(\varepsilon,z)\Big) = \hat{\bar{\Bo}}_{1,\varepsilon}\Big(\varepsilon ^{-p} f(\varepsilon,z)\Big),$$ so also
    $$\hat{\bar{\Bo}}_{1,\varepsilon}\Big(\varepsilon ^{-p} P\left(\varepsilon,z, \frac{d}{dz}\right) \hat{x}(\varepsilon,z)\Big) = \hat{\bar{\Bo}}_{1,\varepsilon}\Big(\varepsilon ^{-p} f(\varepsilon,z)\Big).$$

    It means that for every $n \geq p$ we have 
    $$P_{0}\left(z,\frac{d}{dz}\right)x_{n}(z) + P_{1}\left(z,\frac{d}{dz}\right)x_{n-1}(z) + \dots + P_{p}\left(z,\frac{d}{dz}\right)x_{n-p}(z) = f_{n}(z),$$
    so \eqref{eq:6} holds. Moreover, by the conditions on $x_{0}(z), \dots, x_{p-1}(z)$ also \eqref{eq:5} holds, which completes the proof.
\end{proof}

Repeating the same proof with the ordinary derivative $\frac{d}{dz}$ replaced by the moment derivative $\partial_{m,z}$ for some sequence of moments $m=(m(n))_{n\geq 0}$ we get

\begin{Thm}\label{thm:3m}
    We assume that
    \begin{equation}\label{eq:Pm}
    P(\xi,z,\zeta) = P_{0} (z,\zeta) + P_{1} (z,\zeta) \xi + \dots + P_{p} (z,\zeta) \xi^{p}\quad\text{for}\quad z\in D,\quad \xi,\zeta\in\CC
    \end{equation}
    is a polynomial of two variables $(\xi,\zeta)$ with coefficients in $\EE$ dependent of $z$ and of order $p$ with respect to $\xi$, and let $\hat{f} (\varepsilon,z) = \sum _{n=0} ^{\infty} f_{n}(z) \varepsilon ^{n} \in \mathbb{E}[[\varepsilon]]$. 
    
    Then $\hat{x} (\varepsilon,z) = \sum _{n=0} ^{\infty} x_{n}(z) \varepsilon ^{n} \in \mathbb{E}[[\varepsilon]]$ is a formal power series solution of a singularly perturbed equation
\begin{equation}\label{eq:sing_pertm}
    P(\varepsilon,z, \partial_{m,z}) \hat{x}(\varepsilon,z) = \hat{f}(\varepsilon,z)
\end{equation}    
    if and only if its Borel transform $\hat{y}(\varepsilon,z) := \hat{\Bo} _{1, \varepsilon} \hat{x} (\varepsilon,z)$ is a formal solution of the Cauchy problem
    \begin{equation}\label{eq:partialm}
 \left \{
        \begin{array}{l}
              Q (\partial _{\varepsilon}, z, \partial _{m,z}) \hat{y} (\varepsilon,z)    = \hat{F} (\varepsilon,z)\\
             \partial _{\varepsilon} ^{n} \hat{y} (0,z)   = x_{n} (z)\quad \text{for}\quad n = 0, 1, \dots, p-1,
        \end{array}
       \right. 
\end{equation}
where $Q (\partial_{\varepsilon},z,\partial_{m,z}):= \partial_{\varepsilon}^{p} P(\partial_{\varepsilon}^{-1},z,\partial_{m,z})$, 
$\hat{F} (\varepsilon,z):= \partial_{\varepsilon} ^{p} \hat{\Bo} _{1, \varepsilon} \hat{f} (\varepsilon,z)$ 
    and the functions $x_{0} (z), \dots, x_{p-1}(z)$ satisfy the equations:
\begin{equation*}
    P_n(z,\partial_{m,z})x_0(z)+P_{n-1}(z,\partial_{m,z})x_1(z)+\dots+P_0(z,\partial_{m,z})x_n(z)=f_n(z)
\end{equation*}
for $n=0,\dots,p-1$.
\end{Thm}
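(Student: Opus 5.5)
The plan is to repeat the proof of Theorem~\ref{thm:3} verbatim, checking that replacing $\frac{d}{dz}$ by $\partial_{m,z}$ changes nothing. The key observation is that the Borel transform $\hat{\Bo}_{1,\varepsilon}$ and the extended transform $\hat{\bar{\Bo}}_{1,\varepsilon}$ act only on the variable $\varepsilon$. Each operator $P_j(z,\partial_{m,z})$ acts coefficientwise on the $z$-dependent coefficients of a (Laurent) series in $\varepsilon$. Hence, by linearity (Remark~\ref{re:1}), $\hat{\bar{\Bo}}_{1,\varepsilon}$ commutes with $P_j(z,\partial_{m,z})$. This is the only property of $\frac{d}{dz}$ that was used before, and it holds for any linear operator acting in $z$ alone.

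For $(\Longrightarrow)$, assume $\hat{x}$ solves \eqref{eq:sing_pertm}. Multiplying by $\varepsilon^{-p}$ gives, in $\EE[[\varepsilon,\varepsilon^{-1}]]$, the identity $Q(\varepsilon^{-1},z,\partial_{m,z})\hat{x}=\varepsilon^{-p}\hat{f}$, where $Q(\chi,z,\zeta)=\sum_{j=0}^p P_j(z,\zeta)\chi^{p-j}$. Apply $\hat{\bar{\Bo}}_{1,\varepsilon}$ to both sides. Proposition~\ref{prop:1} converts each factor $\varepsilon^{-(p-j)}$ into $\partial_\varepsilon^{p-j}$, and the commutation above lets the $P_j(z,\partial_{m,z})$ pass through. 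Using Remark~\ref{re:1} to replace $\hat{\bar{\Bo}}_{1,\varepsilon}$ by $\hat{\Bo}_{1,\varepsilon}$ on $\EE[[\varepsilon]]$, we obtain $Q(\partial_\varepsilon,z,\partial_{m,z})\hat{y}=\hat{F}$. The initial data $\partial_\varepsilon^n\hat{y}(0,z)=x_n(z)$ come from the Taylor expansion $\hat{y}=\sum x_n\varepsilon^n/n!$. The stated equations for $x_0,\dots,x_{p-1}$ are the coefficients of $\varepsilon^0,\dots,\varepsilon^{p-1}$ in \eqref{eq:sing_pertm}, obtained by expanding $P(\varepsilon,z,\partial_{m,z})\sum x_n\varepsilon^n$ as a Cauchy product in $\varepsilon$.

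For $(\Longleftarrow)$, I will expand $P(\varepsilon,z,\partial_{m,z})\hat{x}$ coefficientwise. This gives the moment analogues of \eqref{eq:5} for $n<p$ and of \eqref{eq:6} for $n\ge p$. Now run the previous computation backwards. From $Q(\partial_\varepsilon,z,\partial_{m,z})\hat{y}=\hat{F}$, Proposition~\ref{prop:1} and the commutation give
\[
\hat{\bar{\Bo}}_{1,\varepsilon}\bigl(\varepsilon^{-p}P(\varepsilon,z,\partial_{m,z})\hat{x}\bigr)=\hat{\bar{\Bo}}_{1,\varepsilon}\bigl(\varepsilon^{-p}\hat{f}\bigr).
\]
Since $\hat{\bar{\Bo}}_{1,\varepsilon}$ only remembers the coefficients of nonnegative powers, this identity gives exactly the coefficients with index $n\ge p$ of the original series, that is, \eqref{eq:6}. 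The remaining indices $n<p$ are supplied by the assumed equations on $x_0,\dots,x_{p-1}$, which are precisely the moment analogues of \eqref{eq:5}.

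The only point needing real care, and what I regard as the main (mild) obstacle, is the commutation claim. One must check that $\partial_{m,z}$ and multiplication by $z$-dependent coefficients act on each coefficient $x_n(z)\in\EE$ and are independent of $n$. Then applying them commutes with dividing the $n$-th coefficient by $n!$ and with discarding negative powers. I also have to be careful that $\partial_{m,z}$ maps the space $\EE$ into itself, or at least that all expressions are understood formally in $z$. With that settled, the remaining steps are a line-by-line transcription of the proof of Theorem~\ref{thm:3}.
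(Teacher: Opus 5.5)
Your proposal is correct and takes the same route as the paper, which proves Theorem~\ref{thm:3m} by repeating the proof of Theorem~\ref{thm:3} with $\frac{d}{dz}$ replaced by $\partial_{m,z}$. You correctly isolate the one point that needs checking: $\hat{\bar{\Bo}}_{1,\varepsilon}$ commutes with the $P_j(z,\partial_{m,z})$ because these operators act $\CC$-linearly on each coefficient, in the same way for every power of $\varepsilon$. That commutation comes from the linearity of $P_j(z,\partial_{m,z})$, not from Remark~\ref{re:1} as your first paragraph suggests. You also usefully make explicit the forward-direction check of the equations for $x_0,\dots,x_{p-1}$, which the paper leaves implicit.
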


\begin{Rem}\label{re:3}
 Observe that a formal power series solution $\hat{x} (\varepsilon,z) = \sum _{n=0} ^{\infty} x_{n}(z) \varepsilon ^{n} \in \mathbb{E}[[\varepsilon]]$ of a singularly perturbed equation \eqref{eq:sing_pert} is uniquely determined for every inhomogeneity $\hat{f}(\varepsilon,z)\in\EE[[\varepsilon]]$ if and only if the operator $P_0(z,\frac{d}{dz})$ is a linear automorphism on the space $\EE$ (see \cite{lamisu2} for such type of conditions). The same remark holds in the moment case, when $\frac{d}{dz}$ is replaced by $\partial_{m,z}$.
\end{Rem}

\section{Moment pseudodifferential operators and formal solutions of singularly perturbed moment equations}
In this sections we apply Theorem \ref{thm:3m} and the previous results of the second author \cite{mi1,mi2} to study summable solutions of a singularly perturbed equation \eqref{eq:sing_pert}.

In the further study we will assume that the function $P(\xi,z,\zeta)$ given by (\ref{eq:P}) satisfies the condition
\begin{equation*}
 \partial_{z} P(\xi,z,\zeta)\equiv 0\quad\text{for every}\quad z\in D,\quad \xi,\zeta\in\CC,
\end{equation*}
which means that $P(\xi,z,\zeta)$ does not depend on $z$, so 
\begin{equation*}
P(\xi,z,\zeta)=P(\xi,0,\zeta)=P_{0} (\zeta) + P_{1} (\zeta) \xi + \dots + P_{p} (\zeta) \xi^{p}=:P(\xi,\zeta)\quad\text{for}\quad \xi,\zeta\in\CC,\ z\in D
\end{equation*}
is a polynomial in two variables $(\xi,\zeta)$ with constant coefficients.

We also assume that $P(0,\zeta)=P_0(\zeta)\not\equiv 0$ and $\partial_{\zeta}P(0,\zeta)=0$. It means that $P(0,\zeta)=\text{const.}=P_0\neq 0$.

For such polynomials $P(\xi,\zeta)$ we consider
the following singular problem for moment differential equation
\begin{equation*}
 P(\varepsilon,\partial_{m,z})x(\varepsilon,z)=f(\varepsilon,z),
\end{equation*}
where $m=(m(n))_{n\geq 0}$ is a sequence of moments of given order $s>0$.

By $\Gamma_s=(\Gamma_s(n))_{n\in\NN_0}$ we denote the sequence $(\Gamma(1+sn))_{n\in\NN_0}$ for given $s>0$.

Since for every such sequence $m$ of order $s>0$ there exist constants $A,B<\infty$ such that
\begin{equation*}
 A^n\leq \frac{m(n)}{\Gamma_s(n)}\leq B^n\quad\text{for every}\quad n\in\NN_0,
\end{equation*}
one can show that for every $k>0$ and $d\in\RR$ holds the equivalence 
$$
\hat{x}(\varepsilon,z)\in\EE\{\varepsilon\}_{k,d}\quad \text{if and only if}\quad \hat{\Bo}_{m/\Gamma_s,z}\hat{x}(\varepsilon,z)\in\EE\{\varepsilon\}_{k,d}.
$$
Moreover $\hat{x}(\varepsilon,z)$ is a formal solution of the perturbation problem 
$$P(\varepsilon,\partial_{\Gamma_s,z})x(\varepsilon,z)=f(\varepsilon,z)$$ if and only if $\hat{y}(\varepsilon,z)=\hat{\Bo}_{m/\Gamma_s,z}\hat{x}(\varepsilon,z)$ is a formal solution of the perturbation problem 
$$
P(\varepsilon,\partial_{m,z})y(\varepsilon,z)=\hat{\Bo}_{m/\Gamma_s,z}f(\varepsilon,z).
$$
For this reason it is sufficient to consider the case when $m=\Gamma_s$ for some $s>0$. Note that in the special case $s=1$ we get $m=\Gamma_1=(n!)_{n\geq 0}$ and the moment derivative $\partial_{m,z}$ becomes the ordinary derivative $\frac{d}{dz}$. So our study also covers 
the perturbation problem for ordinary differentiation
$$P\left(\varepsilon,\frac{d}{dz}\right)x(\varepsilon,z)=f(\varepsilon,z).$$

By the fundamental theorem of algebra we may factorise the polynomial $P(\xi,\zeta)$ as
\begin{equation}\label{eq:factorise}
P(\xi,\zeta)=P_0\prod_{j=1}^{n_0}\prod_{l=1}^{m_j}\Big(1-\xi\lambda_{jl}(\zeta)\Big)^{r_{jl}},
\end{equation}
where
\begin{equation*}
\sum_{j=1}^{n_0} \sum_{l=1}^{m_j} r_{jl}=p.
\end{equation*}
Here $\lambda_{jl}(\zeta)$ are the roots of the characteristic equation $Q(\lambda,\zeta)=0$ of multiplicity $r_{jl}$ for $l=1,\dots,m_j$ and $j=1,\dots,n_0$, where
\begin{equation*}
 Q(\lambda,\zeta):=\lambda^pP\left(\frac{1}{\lambda},\zeta\right)
 =P_0\lambda^p+P_1(\zeta)\lambda^{p-1}+\dots+P_p(\zeta).
\end{equation*}

Observe that every $\lambda_{jl}(\zeta)$ is an algebraic function. It means by the implicit function theorem that every function $\lambda_{jl}(\zeta)$ is holomorphic on $\CC$ except at a finite number of singular or branching points, and this function has a moderate growth at infinity. More precisely, there exist a \emph{pole order} $q_j\in\QQ$ and a \emph{leading term} $a_{jl}\in\CC\setminus\{0\}$ such that
\begin{equation*}
\lim_{\zeta\to\infty}\frac{\lambda_{jl}(\zeta)}{\zeta^{q_j}}=a_{jl}\quad\text{for}\quad l=1,\dots,m_j\quad\text{and}\quad j=1,\dots,n_0.
\end{equation*}
We denote it shortly by $\lambda_{jl}(\zeta)\sim a_{jl} \zeta^{q_j}$.

Generally, if $\lambda(\zeta)$ is an algebraic function such that $\lambda(\zeta)\sim a\zeta^q$ for some $a\in\CC\setminus\{0\}$ and $q\in\QQ$, then there exists $r_0<\infty$ and $\kappa\in\NN$ such that $\lambda(\zeta)$ is a holomorphic function of the variable $\omega=\zeta^{1/\kappa}$ for $|\zeta|>r_0$ with a pole of some order $n$ at infinity. It means that the function
$\omega\mapsto \lambda(\omega^{\kappa})$ has the Laurent series expansion 
\begin{equation*}
\lambda(\omega^{\kappa})=\sum_{j=-n}^{\infty}\frac{b_j}{\omega^j}
\end{equation*}
at infinity for some coefficients $b_j\in\CC$ with $b_{-n}=a$ and $n=q\kappa\in\ZZ$. This expansion is convergent for $|\omega|>r_0^{1/\kappa}$ with a pole of order $n$ at infinity.

To avoid the ramification point at $z=0$, for $P$ satisfying (\ref{eq:factorise}) we take $\kappa\in\NN$ satisfying the condition
\begin{multline}\label{eq:kappa}
   \kappa=\inf \Big\{n\in\NN\colon \lambda_{jl}(\zeta^n)\in\Oo\big(\{\zeta\in\CC\colon|\zeta|>r_0\}\big)\quad \text{and this function has a pole}\\ \text{at infinity for every}\quad l=1,\dots,m_j, j=1,\dots,n_0\quad \text{and for some}\quad r_0>0\Big\}.
\end{multline}
In other words, $\kappa$ is the smallest natural number such that every function $\lambda_{jl}(\zeta)$ ($l=1,\dots,m_j$, $j=1,\dots,n_0$) given by (\ref{eq:factorise}) is a holomorphic function of the variable $\omega=\zeta^{1/\kappa}$ for $|\zeta|>r_0$. 

We can use the following version of \cite[Lemma 3]{mi1} for such chosen $\kappa$
\begin{Prop}\label{prop:rami}
 Let $P(\xi,\zeta)$ be a polynomial for two variables, $\kappa\in\NN$, $s>0$ and $f(\varepsilon,z)\in\EE[[\varepsilon]]$. Then $\hat{u}(\varepsilon,z)$ is a formal solution of
$$
P(\varepsilon,\partial_{\Gamma_s,z})u(\varepsilon,z)=f(\varepsilon,z)
$$
 if and only if $\hat{v}(\varepsilon,z):=\hat{u}(\varepsilon,z^{\kappa})$ is a formal solution of
 $$
 P(\varepsilon,\partial_{\Gamma_{s/\kappa},z}^{\kappa})v(\varepsilon,z)=f(\varepsilon,z^{\kappa}).
 $$
\end{Prop}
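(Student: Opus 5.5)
The key identity to establish is the operator intertwining
\begin{equation*}
\partial_{\Gamma_{s/\kappa},z}^{\kappa}\big(\varphi(z^{\kappa})\big)=\big(\partial_{\Gamma_s,z}\varphi\big)(z^{\kappa})
\end{equation*}
for every $\varphi\in\EE$ (more precisely, for formal power series in $z$, hence for functions holomorphic near the origin). Once this is available, the proposition follows by a purely formal argument. Since $P(\xi,\zeta)$ has constant coefficients, every term of $P(\varepsilon,\partial_{\Gamma_s,z})\hat u(\varepsilon,z)$ has the form $c\,\varepsilon^{i}\partial_{\Gamma_s,z}^{j}u_n(z)$. The substitution $z\mapsto z^{\kappa}$ acts only on the coefficients $u_n(z)$ and commutes with multiplication by powers of $\varepsilon$. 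Therefore
\begin{equation*}
\Big(P(\varepsilon,\partial_{\Gamma_s,z})\hat u\Big)(\varepsilon,z^{\kappa})=P(\varepsilon,\partial^{\kappa}_{\Gamma_{s/\kappa},z})\hat v(\varepsilon,z).
\end{equation*}
The substitution $z\mapsto z^{\kappa}$ is injective on power series in $z$, so one side equals $f(\varepsilon,z)$ if and only if the other equals $f(\varepsilon,z^{\kappa})$. This gives both implications at once.

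To prove the identity, I would check it on monomials, which suffices by linearity and termwise action on power series. By the definition of the moment derivative, $\partial_{m,z}z^{n}=\frac{m(n)}{m(n-1)}z^{n-1}$ for $n\geq 1$, and $\partial_{m,z}1=0$. For $\varphi(z)=z^{n}$ we therefore have
\begin{equation*}
(\partial_{\Gamma_s,z}\varphi)(z^{\kappa})=\frac{\Gamma(1+sn)}{\Gamma(1+s(n-1))}z^{\kappa(n-1)}.
\end{equation*}
On the other side, applying $\partial_{\Gamma_{s/\kappa},z}$ to $z^{\kappa n}$ a total of $\kappa$ times gives a telescoping product:
\begin{equation*}
\prod_{i=0}^{\kappa-1}\frac{\Gamma\big(1+\tfrac{s}{\kappa}(\kappa n-i)\big)}{\Gamma\big(1+\tfrac{s}{\kappa}(\kappa n-i-1)\big)}\,z^{\kappa n-\kappa}
=\frac{\Gamma(1+sn)}{\Gamma(1+s(n-1))}\,z^{\kappa(n-1)}.
\end{equation*}
The two sides agree. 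For $n=0$ both sides vanish, because $z^{0}$ is killed by the first moment derivative.

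The main obstacle is making sure the intermediate exponents behave correctly. Along the chain of $\kappa$ derivatives the exponents $\kappa n-i$ stay nonnegative, and when $n=0$ the chain is zero from the first step, so no negative indices ever appear. A further point needs attention: $\partial^{\kappa}_{\Gamma_{s/\kappa},z}$ is applied only to series in $z^{\kappa}$ and maps them back into series in $z^{\kappa}$. This is what makes $\hat v$ and $f(\varepsilon,z^{\kappa})$ lie in the same subspace and justifies the "if and only if".

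Finally, I would remark that this is exactly the argument of \cite[Lemma 3]{mi1}, with the variable $\varepsilon$ carried along as a passive parameter.
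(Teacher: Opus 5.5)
Your proposal is correct and follows the paper's proof. The paper's proof consists of the single intertwining identity $(\partial_{\Gamma_s,z}u)(\varepsilon,z^{\kappa})=\partial^{\kappa}_{\Gamma_{s/\kappa},z}\big(u(\varepsilon,z^{\kappa})\big)$. You additionally verify that identity on monomials via the telescoping product of Gamma ratios, and you spell out the injectivity of $z\mapsto z^{\kappa}$ that gives the converse direction, which the paper leaves implicit.
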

\begin{proof}
    To prove this equivalence it is sufficient to observe that 
    $$
    (\partial_{\Gamma_s,z}u)(\varepsilon,z^{\kappa})=\partial_{\Gamma_{s/\kappa},z}^{\kappa}(u(\varepsilon,z^{\kappa}))=\partial_{\Gamma_{s/\kappa},z}^{\kappa}v(\varepsilon,z).
    $$
\end{proof}

Since the polynomial $\tilde{P}(\xi,\zeta):=P(\xi,\zeta^{\kappa})$ has the same factorisation (\ref{eq:factorise}) as $P(\xi,\zeta)$ with $\lambda_{jl}(\zeta)$ replaced by $\tilde{\lambda}_{jl}(\zeta):=\lambda_{jl}(\zeta^{\kappa})$, we see that every function $\tilde{\lambda}_{jl}(\zeta)$ is holomorphic for $|\zeta|>r_0$ and it satisfies $\tilde{\lambda}_{jl}(\zeta)\sim a_{jl}\zeta^{q_j\kappa}$ with $q_j\kappa\in\ZZ$.

For such functions we may define the following pseudodifferential operators

\begin{Def}[see also {\cite[Definition 13]{mi2}}]
\label{def:pseudo}
Let $s>0$, $k=1/s$ and $\lambda(\zeta)$ be a holomorphic function for $|\zeta|\geq r_0$ and of moderate growth at infinity. A \emph{moment pseudodifferential operator} $\lambda(\partial_{\Gamma_{s},z})\colon \Oo(D)\to\Oo(D)$ is defined by
\begin{gather}
\label{eq:lambda_1}
\lambda(\partial_{\Gamma_{s},z})\varphi(z):=\frac{1}{2\pi i}\oint_{|w|=\varepsilon}\varphi(w)\int_{r_0}^{e^{i\theta}\infty}\lambda(\zeta)\mathbf{E}_{s}(z\zeta)k(w\zeta)^{k-1}e^{-(\zeta w)^{k}}\,d\zeta dw
\end{gather}
for every $\varphi(z)\in\Oo(D_r)$ and $|z|<\varepsilon<r$, where $\theta\in(-\arg w -\frac{\pi}{2k}, -\arg w +\frac{\pi}{2k})$, $\oint_{|w|=\varepsilon}\,dw$ means that we integrate along the positively oriented circle of radius $\varepsilon$ and $\mathbf{E}_{s}(z)=\sum_{n=0}^{\infty}\frac{z^n}{\Gamma(1+ns)}$ denotes the Mittag-Leffler function of index $s$.
\end{Def}

By Proposition \ref{prop:rami} we can extend the definition
of moment pseudodifferential operators from $\kappa=1$ to the case when $\kappa>1$, where $\kappa$ is given by (\ref{eq:kappa}). Namely we have 

\begin{Def}
\label{def:pseudo_2}
Let $s>0$, $\kappa\in\NN$ and $\lambda(\zeta)$ be a holomorphic function of the variable $\omega=\zeta^{1/\kappa}$ for $|\zeta|\geq r_0$ and of moderate growth at infinity. A \emph{moment pseudodifferential operator} $\lambda(\partial_{\Gamma_{s},z})\colon \Oo_{1/\kappa}(D)\to\Oo(D_{1/\kappa})$ is defined by
\begin{gather}
\label{eq:lambda_2}
\lambda(\partial_{\Gamma_{s},z})\varphi(z):=\lambda(\partial_{\Gamma_{s/\kappa},w}^{\kappa})(\varphi(w^{\kappa}))\Big|_{w=z^{1/\kappa}}
\end{gather}
for every $\varphi(z)\in\Oo_{1/\kappa}(D)$.
\end{Def}

We consider a formal power series solution $\hat{x}(\varepsilon,z)=\sum_{n=0}^{\infty}x_n(z)\varepsilon^n\in\EE[[\varepsilon]]$ of the perturbation problem
\begin{equation}\label{eq:Problem}
    P(\varepsilon, \partial_{\Gamma_s,z}) \hat{x}(\varepsilon,z) = \hat{f}(\varepsilon,z)\in\EE[[\varepsilon]],
\end{equation}
where the polynomial $P(\xi,\zeta)$ is given by (\ref{eq:factorise}).

Since $P_0=\text{const.}\neq 0$,
by Theorem \ref{thm:3m} the perturbation problem (\ref{eq:Problem}) has a unique formal power series solution.

In the next theorem we will show how to decompose the formal solution of  the singular perturbation problem on the sum of formal power series connected with the appropriate pseudodifferential operators. 
\begin{Thm}\label{thm:4}
 Let $\hat{x}(\varepsilon,z)$ be a formal solution of the singular perturbation problem
 \begin{equation}\label{eq:tilde}
  P(\varepsilon,\partial_{\Gamma_s,z})\hat{x}(\varepsilon,z)=\hat{f}(\varepsilon,z),
 \end{equation}
where
$
 P(\varepsilon,\zeta)= P_0+P_1(\zeta)\varepsilon+\dots+P_p(\zeta)\varepsilon^p
 =P_0(1-\varepsilon\lambda_1(\zeta))^{m_1}\dots(1-\varepsilon\lambda_l(\zeta))^{m_l}$ and $\hat{f}(\varepsilon,z)\in\EE[[\varepsilon]]$.

Then we can decompose $\hat{x}(\varepsilon,z)$ as
$$\hat{x}(\varepsilon,z)=\sum_{\alpha=1}^l\sum_{\beta=1}^{m_{\alpha}}\hat{x}_{\alpha\beta}(\varepsilon,z),$$
where $\hat{x}_{\alpha\beta}(\varepsilon,z)$ is a formal power series satisfying
\begin{equation*}
 \hat{x}_{\alpha\beta}(\varepsilon,z)=q_{\alpha\beta}(\varepsilon, \partial_{\Gamma_s,z})\hat{f}(\varepsilon,z)
\end{equation*}
with
\begin{equation*}
 q_{\alpha\beta}(\varepsilon, \zeta)=c_{\alpha\beta}(\zeta)\sum_{n=0}^{\infty}\frac{n!}{(n+1-\beta)!}\lambda_{\alpha}^n(\zeta)\varepsilon^n.
\end{equation*}
Here $c_{\alpha\beta}(\zeta)$ are algebraic functions satisfying
\begin{equation*}
 q_n(\zeta)=\sum_{\alpha=1}^l\sum_{\beta=1}^{m_{\alpha}}c_{\alpha\beta}(\zeta)\frac{n!}{(n+1-\beta)!}\lambda_{\alpha}^n(\zeta)\quad\text{for}\quad n\in\NN_0
\end{equation*}
for the sequence of polynomials $(q_n(\zeta))_{n\geq 0}$ being the unique solution of the difference equation
\begin{equation}\label{eq:diffe}
 P_0 q_{n}(\zeta)+P_1(\zeta)q_{n-1}(\zeta)+\dots+P_p(\zeta)q_{n-p}(\zeta)=0\quad\text{for}\quad n\geq 1
\end{equation}
with the initial conditions $q_0(\zeta)\equiv 1/P_0$ and $q_{-1}(\zeta)\equiv\dots\equiv q_{1-p}(\zeta)\equiv 0$.

Moreover, if $\hat{f}(\varepsilon,z)\in\EE[[\varepsilon]]_{\tilde{s}}$ for some $\tilde{s}\geq 0$ then $\hat{x}_{\alpha\beta}(\varepsilon,z)\in\EE[[\varepsilon]]_{\max\{\tilde{s},q_{\alpha}s\}}$, where $q_{\alpha}$ is a pole order of $\lambda_{\alpha}(\zeta)$ at the infinity.
\end{Thm}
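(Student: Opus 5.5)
We would first solve the problem at the level of formal symbols. Write $\hat{f}(\varepsilon,z)=\sum_{n\ge0}f_n(z)\varepsilon^n$. We look for the solution in the form $\hat{x}(\varepsilon,z)=q(\varepsilon,\partial_{\Gamma_s,z})\hat{f}(\varepsilon,z)$, where $q(\varepsilon,\zeta)=\sum_{n\ge0}q_n(\zeta)\varepsilon^n$ is the formal inverse $1/P(\varepsilon,\zeta)$ in $\CC[\zeta][[\varepsilon]]$. This inverse exists because $P(0,\zeta)=P_0\neq0$. Comparing coefficients in $P(\varepsilon,\zeta)q(\varepsilon,\zeta)=1$ gives exactly the recursion (\ref{eq:diffe}) with $q_0=1/P_0$ and $q_{-1}=\dots=q_{1-p}=0$, so each $q_n$ is a polynomial in $\zeta$. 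Since all operators $P_j(\partial_{\Gamma_s,z})$ and $q_n(\partial_{\Gamma_s,z})$ are polynomials in the single operator $\partial_{\Gamma_s,z}$, they commute. Hence $P(\varepsilon,\partial_{\Gamma_s,z})\,q(\varepsilon,\partial_{\Gamma_s,z})\hat f=\hat f$ holds coefficientwise. By uniqueness of the formal solution (Theorem \ref{thm:3m} and Remark \ref{re:3}, since $P_0$ is a nonzero constant), we get $\hat{x}=\sum_n q_n(\partial_{\Gamma_s,z})\big(\sum_{j\le n} f_{n-j}\varepsilon^{n}\big)$ in the obvious Cauchy-product sense.

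Next we decompose $q_n$ using difference equations with constant coefficients, with $\zeta$ treated as a parameter, exactly as in the proof of Theorem \ref{th1}. The characteristic polynomial of (\ref{eq:diffe}) is $Q(\lambda,\zeta)=\lambda^pP(1/\lambda,\zeta)=P_0\prod_\alpha(\lambda-\lambda_\alpha(\zeta))^{m_\alpha}$. For generic $\zeta$ (outside the finitely many points where roots collide) the general solution is therefore $\sum_\alpha\sum_{\beta=1}^{m_\alpha}c_{\alpha\beta}(\zeta)\frac{n!}{(n+1-\beta)!}\lambda_\alpha^n(\zeta)$. The polynomials $n\mapsto n!/(n+1-\beta)!$, $\beta=1,\dots,m_\alpha$, span the polynomials of degree $<m_\alpha$, so this is a valid basis. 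The coefficients $c_{\alpha\beta}(\zeta)$ are fixed by the $p$ initial conditions. This is a confluent Vandermonde system whose entries are algebraic in $\zeta$ and whose determinant is a nonzero product of powers of $\lambda_\alpha-\lambda_{\alpha'}$ and of $\lambda_\alpha$. Cramer's rule then shows that the $c_{\alpha\beta}$ are algebraic functions, holomorphic in $\zeta^{1/\kappa}$ for $|\zeta|>r_0$ and of moderate growth. Interchanging the finite sum over $(\alpha,\beta)$ with the sum over $n$ gives $q=\sum_{\alpha\beta}q_{\alpha\beta}$. Defining $\hat x_{\alpha\beta}=q_{\alpha\beta}(\varepsilon,\partial_{\Gamma_s,z})\hat f$ coefficientwise via the pseudodifferential operators of Definitions \ref{def:pseudo} and \ref{def:pseudo_2} then yields the decomposition. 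Linearity of the integral representation (\ref{eq:lambda_1}) in $\lambda$ makes the sum of the pieces equal to $q_n(\partial_{\Gamma_s,z})$ applied to $f$. The ramified case is handled by Proposition \ref{prop:rami}.

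The main obstacle is the Gevrey estimate. The $n$-th coefficient of $\hat x_{\alpha\beta}$ is $\sum_{j\le n}c_{\alpha\beta}(\partial)\frac{j!}{(j+1-\beta)!}\lambda_\alpha^j(\partial)f_{n-j}$. We would bound it via (\ref{eq:lambda_1}), estimating $|c_{\alpha\beta}(\zeta)\lambda_\alpha^j(\zeta)|\le C A^j(1+|\zeta|)^{q_\alpha j+N}$ on the ray $|\zeta|\ge r_0$. Integrating against $|\mathbf{E}_s(z\zeta)|\,|w\zeta|^{k-1}e^{-\mathrm{Re}(\zeta w)^k}$ with $|z|<\varepsilon'<\varepsilon=|w|$ produces a Gamma factor of size $\Gamma(1+(q_\alpha j+N)s)$, up to geometric factors. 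The growth of $\mathbf{E}_s(z\zeta)$ is absorbed because $|z|<|w|$, which is the same estimate as in \cite[Lemma 5]{mi1}. The prefactor $j^{\beta-1}$ is likewise absorbed geometrically. This gives $\|c_{\alpha\beta}(\partial)\lambda_\alpha^j(\partial)\varphi\|_{\Oo_{r'}}\le CB^j j!^{q_\alpha s}\|\varphi\|_{\Oo_r}$ on a slightly smaller disc. The loss of radius is harmless, since it is independent of $j$ and we only need a fixed smaller disc. Combining this with $\|f_{n-j}\|\le AB^{n-j}(n-j)!^{\tilde s}$ and $(n-j)!^{\tilde s}j!^{q_\alpha s}\le n!^{\max\{\tilde s,q_\alpha s\}}$ gives $\hat x_{\alpha\beta}\in\EE[[\varepsilon]]_{\max\{\tilde s,q_\alpha s\}}$. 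If $q_\alpha\le0$ the same bound gives only geometric growth, which is consistent with the claim. The delicate points are uniformity in $j$ of the ray integral, and controlling $c_{\alpha\beta}$ near $r_0$, where the algebraic functions may be singular. For the latter, we would choose $r_0$ beyond all branch and collision points.
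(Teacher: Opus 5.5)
Your proposal is correct and follows essentially the same route as the paper. Both arguments write the unique formal solution as $\sum_{n}\varepsilon^n q_n(\partial_{\Gamma_s,z})\hat f(\varepsilon,z)$ using the recursion \eqref{eq:diffe}, split $q_n$ via the theory of constant-coefficient difference equations, and get the Gevrey bound from the integral representation of the pseudodifferential operators. The only differences are details you make explicit where the paper does not: the confluent Vandermonde/Cramer argument for the algebraicity and moderate growth of $c_{\alpha\beta}$, and a direct estimate of the ray integral that absorbs the growth of $c_{\alpha\beta}$, where the paper instead reduces to $c_{\alpha\beta}\equiv 1$ and cites \cite[Lemma 1]{mi1}.
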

\begin{proof}
Since $P_0\neq 0$, by Theorem \ref{thm:3m} (see also Remark \ref{re:3}) the equation (\ref{eq:tilde})
has the unique formal power series solution $\hat{x}(\varepsilon,z)$.
We will show that this formal power series solution is given by
\begin{equation}\label{eq:for_x}
 \hat{x}(\varepsilon,z)=\sum_{n=0}^{\infty}\varepsilon^n q_n(\partial_{\Gamma_s,z})\hat{f}(\varepsilon,z),
\end{equation}
where the sequence of polynomials $(q_n(\zeta))_{n\geq 0}$ satisfies the difference equation (\ref{eq:diffe})
with the initial conditions $q_0(\zeta)\equiv 1/P_0$ and $q_{-1}(\zeta)\equiv\dots\equiv q_{1-p}(\zeta)\equiv 0$.

Indeed, putting \eqref{eq:for_x} into \eqref{eq:tilde} we get
\begin{multline*}
 P(\varepsilon,\partial_{\Gamma_s,z})\left(\sum_{n=0}^{\infty}\varepsilon^n q_n(\partial_{\Gamma_s,z})\hat{f}(\varepsilon,z)\right)=\hat{f}(\varepsilon,z)+\sum_{n=1}^{\infty}\varepsilon^n P_0 q_n(\partial_{\Gamma_s,z})\hat{f}(\varepsilon,z)\\
 +
 \sum_{n=0}^{\infty}\varepsilon^{n+1} P_1(\partial_{\Gamma_s,z}) q_n(\partial_{\Gamma_s,z})\hat{f}(\varepsilon,z)
 +\dots+
 \sum_{n=0}^{\infty}\varepsilon^{n+p} P_p(\partial_{\Gamma_s,z}) q_n(\partial_{\Gamma_s,z})\hat{f}(\varepsilon,z)\\
 = \hat{f}(z,\varepsilon) + \sum_{n=1}^{\infty}\varepsilon^n \Big(P_0 q_n(\partial_{\Gamma_s,z})+P_1(\partial_{\Gamma_s,z}) q_{n-1}(\partial_{\Gamma_s,z})+\dots+P_p(\partial_{\Gamma_s,z}) q_{n-p}(\partial_{\Gamma_s,z})\Big)\hat{f}(\varepsilon,z)\\
  = \hat{f}(\varepsilon,z).
\end{multline*}
Since the sequence $(q_n(\zeta))_{n\geq 0}$ satisfies the difference equation \eqref{eq:diffe}, by the general theory of difference equations we conclude that
\begin{equation*}
 q_n(\zeta)=\sum_{\alpha=1}^l\sum_{\beta=1}^{m_{\alpha}}c_{\alpha\beta}(\zeta)\frac{n!}{(n+1-\beta)!}\lambda_{\alpha}^n(\zeta)\quad\text{for}\quad n\in\NN_0
\end{equation*}
for some algebraic functions $c_{\alpha\beta}(\zeta)$.
It means that $\hat{x}(\varepsilon,z)=\sum_{\alpha=1}^l\sum_{\beta=1}^{m_{\alpha}}\hat{x}_{\alpha\beta}(\varepsilon,z)$,
where
\begin{equation*}
 \hat{x}_{\alpha\beta}(\varepsilon,z)=q_{\alpha\beta}(\varepsilon, \partial_{\Gamma_s,z})\hat{f}(\varepsilon,z)
\end{equation*}
and
\begin{equation*}
 q_{\alpha\beta}(\varepsilon, \zeta)=c_{\alpha\beta}(\zeta)\sum_{n=0}^{\infty}\frac{n!}{(n+1-\beta)!}\lambda_{\alpha}^n(\zeta)\varepsilon^n.
\end{equation*}

Let $\hat{f}(\varepsilon,z)=\sum_{n=0}^{\infty}f_n(z)\varepsilon^n\in\EE[[\varepsilon]]_{\tilde{s}}$ for some $\tilde{s}\geq 0$.
To calculate the Gevrey order of $\hat{x}_{\alpha\beta}(\varepsilon,z)=\sum_{n=0}^{\infty}x_n(z)\varepsilon^n$, first observe that since the function $c_{\alpha\beta}(\zeta)$ has a moderate growth at infinity, it does not change the Gevrey order of $\hat{x}_{\alpha\beta}(\varepsilon,z)$. Hence without loss of generality we may assume that $c_{\alpha\beta}(\zeta)\equiv 1$. In this case using \cite[Lemma 1]{mi1} we can estimate $x_n(z)$ by
\begin{equation*}
 \|x_n(z)\|_{\EE}\leq\sum_{k=0}^{n}\frac{k!}{(k+1-\beta)!}\|\lambda_{\alpha}^k(\partial_{\Gamma_s,z})f_{n-k}(z)\|_{\EE}\leq AB^k\sum_{k=0}^n\Gamma(1+q_{\alpha}sk)\Gamma(1+\tilde{s}(n-k))
\end{equation*}
for some $A,B<\infty$. It means that there exist $C,D<\infty$ such that
\begin{equation*}
 \|x_n(z)\|_{\EE}\leq C D^n\Gamma(1+\max\{\tilde{s},q_{\alpha}s\}n)\quad\text{for every}\quad n\in\NN_0.
\end{equation*}
\end{proof}

\section{Multisummable solutions of singularly perturbed equations}
In this section we consider the general case of the perturbation problem
\begin{equation}\label{eq:perturb}
    P(\varepsilon, \partial_{\Gamma_s,z}) \hat{x}(\varepsilon,z) = \hat{f}(\varepsilon,z)\in\EE[[\varepsilon]],
\end{equation}
where 
 \begin{equation*}
 P(\xi,\zeta)=P_0 \tilde{P}(\xi,\zeta)=P_0\prod_{j=1}^{n_0}\prod_{l=1}^{m_j}\Big(1-\xi\lambda_{jl}(\zeta)\Big)^{r_{jl}}
 \end{equation*}
with $\lambda_{jl}(\zeta)\sim a_{jl}\zeta^{q_j}$ and $q_j=\mu_j/\nu_j$ for some relatively prime $\mu_j,\nu_j\in\ZZ$, where $l=1,\dots,m_j$ and $j=1,\dots,n_0$. 

Additionally we assume that $q_1>q_2>\dots>q_{n_0}$ and
\begin{equation*}
\tilde{n}:=
\left\{
   \begin{array}{lll}
 0&\textrm{for}& q_1\leq 0\\
\max\{i\colon q_i>0\}&\textrm{for}& q_1>0.
\end{array}
\right.
\end{equation*}

\bigskip\par
In this case we have the following sufficient conditions for summability and multisummability of formal solutions.
\begin{Thm}\label{thm:6}
 Assume that $P(\xi,\zeta)$ is a polynomial of two variables given by \eqref{eq:factorise}.
 Then the formal power series solution $\hat{x} (\varepsilon,z) = \sum _{n=0} ^{\infty} x_{n}(z) \varepsilon ^{n} \in \mathbb{E}[[\varepsilon]]$ of a singularly perturbed equation
\begin{equation*}
    P(\varepsilon, \partial_{\Gamma_s,z}) \hat{x}(\varepsilon,z) = \hat{f}(\varepsilon,z)\in\EE[[\varepsilon]]_{q_1s}
\end{equation*}
is of Gevrey order $q_1s$.

Moreover:
\begin{enumerate}
 \item[(a)] if $\tilde{n}=0$ then this formal power series
 $\hat{x} (z, \varepsilon)$ is convergent.
 \item[(b)] if $\tilde{n}=1$ and $\hat{f}(\varepsilon,z)$ satisfies additionally the condition
 \begin{equation}\label{eq:cond_b}
 \hat{\Bo}_{1/(q_1s),\varepsilon}\hat{f}(\varepsilon,z)\in\Oo^{1/(q_1s),1/s}\Big(\hat{S}_d\times\hat{S}_{(d+2k\pi+\arg a_{1l})/q_1}\Big)
\end{equation}
for $l=1,\dots,m_1$ and $k=0,\dots,\mu_1-1$,
 then $\hat{x}(\varepsilon,z)$ is $1/(q_1s)$-summable in a direction $d$.
\item[(c)] if $\tilde{n}\geq 2$ and $\hat{f}(\varepsilon,z)$ satisfies additionally the condition
\begin{equation}\label{eq:cond_c}
 \hat{\Bo}_{1/(q_js),\varepsilon}\hat{f}(\varepsilon,z)\in\Oo^{1/(q_js),1/s}\Big(\hat{S}_d\times\hat{S}_{(d+2k\pi+\arg a_{jl})/q_j}\Big)
\end{equation}
for $l=1,\dots,m_j$, $k=0,\dots,\mu_j-1$ and $j=1,\dots,\tilde{n}$,
then $\hat{x} (\varepsilon,z)$ is $(1/(q_{\tilde{n}}s),\dots,1/(q_1s))$-multisummable in an admissible multidirection
\linebreak
$(d_{\tilde{n}},\dots,d_{1})\in\RR^{\tilde{n}}$.
\end{enumerate}
\end{Thm}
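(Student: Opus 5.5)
Our starting point is the decomposition of Theorem \ref{thm:4}. First we reduce to the unramified case: by Proposition \ref{prop:rami}, replacing $z$ by $z^{\kappa}$ with $\kappa$ from (\ref{eq:kappa}) turns $P(\varepsilon,\partial_{\Gamma_s,z})$ into $P(\varepsilon,\partial^{\kappa}_{\Gamma_{s/\kappa},z})$. All roots $\tilde\lambda_{jl}(\zeta)=\lambda_{jl}(\zeta^{\kappa})$ are then holomorphic for $|\zeta|>r_0$ with integer pole orders $q_j\kappa$. The moment pseudodifferential operators are therefore given by Definition \ref{def:pseudo}, and through Definition \ref{def:pseudo_2} we return to $\Oo_{1/\kappa}$.

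Theorem \ref{thm:4} then writes $\hat x=\sum_{j,l,\beta}\hat x_{jl\beta}$, where $\hat x_{jl\beta}=q_{jl\beta}(\varepsilon,\partial_{\Gamma_s,z})\hat f$. The Gevrey statement follows at once from the last part of Theorem \ref{thm:4}. Each piece has order $\max\{q_1 s,q_j s\}=q_1 s$, since $q_1\ge q_j$ and $\hat f\in\EE[[\varepsilon]]_{q_1s}$.

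For (a), if $q_1\le 0$ then every $\lambda_{jl}$ is bounded at infinity. The estimate in the proof of Theorem \ref{thm:4} then gives $\|\lambda^k_{jl}(\partial_{\Gamma_s,z})\varphi\|\le AB^k\|\varphi\|$, so the pieces are Gevrey of order $0$, i.e. convergent. For this we read the hypothesis $\hat f\in\EE[[\varepsilon]]_{q_1s}$ with $q_1 s\le 0$ as convergence of $\hat f$.

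For (b) and (c) we group the pieces by $j$ and set $\hat x_j:=\sum_{l,\beta}\hat x_{jl\beta}$. The aim is to show that $\hat x_j$ is $1/(q_js)$-summable in $d_j$ for $j\le\tilde n$, and convergent for $j>\tilde n$ by the argument in (a). Multisummability then holds by definition, with the admissible multidirection chosen around $d$; in case (b) there is only one nonconvergent block, so we obtain $1/(q_1s)$-summability.

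To treat $\hat x_{jl\beta}$ we apply the Borel transform $\hat\Bo_{1/(q_js),\varepsilon}$. Since $q_{jl\beta}(\varepsilon,\zeta)=c_{jl\beta}(\zeta)\sum_n\frac{n!}{(n+1-\beta)!}\lambda_{jl}^n(\zeta)\varepsilon^n$, the factor $n!/(n+1-\beta)!$ is a polynomial in $n$. As in the proof of Theorem \ref{thm:2}, we realise it by powers of $\varepsilon\partial_\varepsilon$ and then use Remark \ref{re:5}. The algebraic factor $c_{jl\beta}$ has moderate growth and only composes with a further pseudodifferential operator, which we expect to preserve the exponential growth class, as in \cite{mi2}.

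This reduces everything to the model series $\sum_n \lambda_{jl}^n(\partial_{\Gamma_s,z})\hat f\,\varepsilon^n$, i.e. to the formal solution of $(1-\varepsilon\lambda_{jl}(\partial_{\Gamma_s,z}))u=\hat f$. We treat $\hat f$ as a series in $\varepsilon$ whose Borel transform is given by hypothesis (\ref{eq:cond_b}) or (\ref{eq:cond_c}); the Borel transform of $u$ is then a convolution in $\varepsilon$ of that of $\hat f$ with the kernel from the homogeneous problem.

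The homogeneous part is controlled by the integral representation (\ref{eq:lambda_1}), using the leading behaviour $\lambda_{jl}(\zeta)\sim a_{jl}\zeta^{q_j}$. Writing $\lambda_{jl}(\zeta)^n\approx a_{jl}^n\zeta^{q_jn}$ and treating $\zeta^{q_j}$ like $\partial_{\Gamma_{q_js},z}$, we mimic Lemma \ref{le:5}. In the Borel plane the solution behaves like $\hat\Bo\hat f$ evaluated at $z$ shifted along directions $\zeta^{-q_j}\propto \varepsilon a_{jl}$. This is what forces continuation of $\hat f$ in $z$ into the $\mu_j$ sectors $\hat S_{(d+2k\pi+\arg a_{jl})/q_j}$ with growth of order $1/s$. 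Deforming the $w$-contour in (\ref{eq:lambda_1}) into these sectors, as in \cite[Lemma 5]{mi1}, should give the growth order $1/(q_js)$ in $\varepsilon$ in $\hat S_d$.

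The main obstacle is this last step. The lower-order terms of $\lambda_{jl}$ are not exactly monomials, and there are $\mu_j$ branches of $\zeta^{q_j}$. We would handle them by writing $\lambda_{jl}(\zeta)=a_{jl}\zeta^{q_j}(1+O(\zeta^{-1/\kappa}))$ and following the proof of the corresponding summability result for moment PDEs in \cite{mi2}, which has the same structure. The difference here is that the initial datum is replaced by the inhomogeneity, with an extra Borel variable.
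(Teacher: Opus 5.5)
Your derivation of the Gevrey statement and of (a) from the last assertion of Theorem \ref{thm:4} is fine; the paper gets these from \cite[Theorem 1]{mi3} instead. However, the actual content of (b) and (c) is not proved. Everything rests on showing that, under \eqref{eq:cond_b} or \eqref{eq:cond_c}, $\hat{\Bo}_{1/(q_js),\varepsilon}\hat x_{jl\beta}$ continues to $\hat S_d\times D$ with exponential growth of order $1/(q_js)$. For this you offer only a heuristic: you replace $\lambda_{jl}(\zeta)^n$ by $a_{jl}^n\zeta^{q_jn}$, argue by analogy with Lemma \ref{le:5}, take a convolution with a ``kernel from the homogeneous problem'', and use a contour deformation that ``should'' work. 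The non-monomial part of $\lambda_{jl}$, the $\mu_j$ branches, and the action of $c_{jl\beta}(\partial_{\Gamma_s,z})$ on the growth classes are explicitly left open. That kernel is the solution operator of a moment PDE in $(\varepsilon,z)$, so the missing estimates are exactly those of a summability theorem for moment PDEs.

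The idea you need is to invoke such a theorem rather than reprove it. By Theorem \ref{thm:3m}, $\hat{\Bo}_{1,\varepsilon}\hat x$ is the formal solution of the Cauchy problem $Q(\partial_\varepsilon,\partial_{\Gamma_s,z})y=\hat F$. Applying the $m$-Borel transform in $\varepsilon$ with $m(n)=\Gamma(1+q_jsn)/n!$ shows that $\hat{\Bo}_{1/(q_js),\varepsilon}\hat x$ (in (b)), resp.\ $\hat{\Bo}_{1/(q_js),\varepsilon}\hat x_{jl\beta}$ (in (c)), solves a Cauchy problem for $Q(\partial_{\Gamma_{q_js},\varepsilon},\partial_{\Gamma_s,z})$ whose inhomogeneity comes from $\hat{\Bo}_{1/(q_js),\varepsilon}\hat f$. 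Then \cite[Theorem 2]{mi3} gives the required continuation and growth directly. The sectors $\hat S_{(d+2k\pi+\arg a_{jl})/q_j}$ in \eqref{eq:cond_b} and \eqref{eq:cond_c} are precisely its hypotheses.

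There is also a wrong step. You declare the blocks with $j>\tilde n$ (bounded $\lambda_{jl}$) convergent ``by the argument in (a)'', but that argument used convergence of $\hat f$. In (b) the inhomogeneity is only Gevrey of order $q_1s$, and Theorem \ref{thm:4} gives these blocks only Gevrey order $\max\{q_1s,q_js\}=q_1s$. In general they diverge, so they too must be shown to be $1/(q_1s)$-summable in the direction $d$. The paper avoids this in (b) by applying \cite[Theorem 2]{mi3} to the whole transformed equation instead of block by block.
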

\begin{proof}
 By Theorem \ref{thm:3m} the formal power series $\hat{y}(\varepsilon,z):= \hat{\bar{\Bo}} _{1, \varepsilon} \hat{x} (\varepsilon,z)$ 
 is a formal solution of the Cauchy problem
    \begin{equation}\label{eq:CPr}
 \left \{
        \begin{array}{l}
              Q (\partial _{\varepsilon}, \partial_{\Gamma_s,z}) y (\varepsilon,z)    = \hat{F} (\varepsilon,z)\\
             \partial _{\varepsilon} ^{n} y (0,z)   = x_{n} (z)\quad \text{for}\quad n = 0, 1, \dots, p-1,
        \end{array}
       \right. 
\end{equation}
where $Q (\partial_{\varepsilon},\partial_{\Gamma_s,z}):= \partial_{\varepsilon}^{p} P(\partial_{\varepsilon}^{-1},\partial_{\Gamma_s,z})$, 
$\hat{F} (\varepsilon,z):= \partial _{\varepsilon} ^{p} \hat{\bar{\Bo}} _{1, \varepsilon} \hat{f} (\varepsilon,z)$ 
    and the functions $x_{0} (z), \dots, x_{p-1}(z)$ are determined inductively by $x_0(z)=\frac{g_0(z)}{P_0}$ and
\begin{equation*}
    x_n(z)=\frac{1}{P_0}\left(g_n(z)-P_n(\partial_{\Gamma_s,z})x_0(z)-P_{n-1}(\partial_{\Gamma_s,z})x_1(z)-\dots-P_1(\partial_{\Gamma_s,z})x_{n-1}(z)\right)
\end{equation*}
for $n=1,\dots,p-1$.

If we take the sequence of moments $m=(m(n))_{n\geq 0}:=\big(\frac{\Gamma(1+q_1sn)}{n!}\big)_{n\geq 0}$ and $m$-Borel transform we see that $\hat{\Bo}_{1/(q_1s),\varepsilon}\hat{x}(\varepsilon,z)=\hat{\Bo}_{m,\varepsilon}\hat{y}(\varepsilon,z)=:\hat{w}(\varepsilon,z)$. Hence applying $m$-Borel transform to the Cauchy problem \eqref{eq:CPr} and using \cite[Proposition 7]{mi2} we get that $\hat{w}(\varepsilon,z)$ is a formal solution of the following moment differential equation 
\begin{equation*}
 \left \{
        \begin{array}{l}
              Q (\partial _{\Gamma_{q_1s},\varepsilon}, \partial _{\Gamma_s,z}) w (\varepsilon,z)    = \hat{G}(\varepsilon,z)\\
             \partial _{\Gamma_{q_1s},\varepsilon} ^{n} w (0,z)   = x_{n}(z)\quad \text{for}\quad n = 0, 1, \dots, p-1,
        \end{array}
       \right. 
\end{equation*}
where $\hat{G}(\varepsilon,z):=\hat{\Bo}_{m,\varepsilon}\hat{F}(\varepsilon,z)=\partial _{\varepsilon} ^{p} \hat{\Bo} _{1/(q_1s), \varepsilon} \hat{f} (\varepsilon,z)$. 

If $\hat{f}(\varepsilon,z)\in\EE[[\varepsilon]]_{q_1s}$ then the formal power series $\hat{G}(\varepsilon,z)$ is convergent and its sum $G(\varepsilon,z)$ is holomorphic in a complex neighbourhood of the origin $D^2$. In this case by \cite[Theorem 1]{mi3} we conclude that $\hat{w} (\varepsilon,z)\in\EE[[\varepsilon]]_0$ and in the consequence $\hat{x}(\varepsilon,z)\in\EE[[\varepsilon]]_{q_1s}$.
\medskip\par
(a) If $\tilde{n}=0$ then $q_1\leq 0$ and we deduce that 
$\hat{x}(\varepsilon,z)\in\EE[[\varepsilon]]_{0}$. (It also follows directly from Theorem \ref{thm:4}.)
\medskip\par
(b) If $\tilde{n}=1$ and $\hat{f}(\varepsilon,z)$ satisfies additionally the condition
 \begin{equation*}
 \hat{\Bo}_{1/(q_1s),\varepsilon}\hat{f}(\varepsilon,z)\in\Oo^{1/(q_1s),1/s}\Big(\hat{S}_d\times\hat{S}_{(d+2k\pi+\arg a_{1l})/q_1}\Big)
\end{equation*}
for $l=1,\dots,m_1$ and $k=0,\dots,\mu_1-1$,
then the formal power series $\hat{G}(\varepsilon,z)$ is convergent and its sum $G(\varepsilon,z)$ belongs to the space
$\Oo^{1/(q_1s),1/s}\Big(\hat{S}_d\times\hat{S}_{(d+2k\pi+\arg a_{1l})/q_1}\Big)$ for $l=1,\dots,m_1\quad\text{and}\quad k=0,\dots,\mu_1-1$.

Then by \cite[Theorem 2]{mi3} also $\hat{w}(\varepsilon,z)=\hat{\Bo}_{1/(q_1s),\varepsilon}\hat{x}(\varepsilon,z)$ is convergent and its sum belongs to the same space as $G(\varepsilon,z)$. In consequence
$\hat{x}(\varepsilon,z)$ is $1/(q_1s)$-summable in a direction $d$ (see also \cite[Theorem 4]{mi3}).

\medskip\par
(c) If $\tilde{n}\geq 2$ then by Theorem \ref{thm:4}
we may decompose the formal solution $\hat{x}(\varepsilon,z)$ by
$$\hat{x}(\varepsilon,z)=\sum_{j=1}^{n_0}\sum_{l=1}^{m_j}\sum_{\beta=1}^{r_{jl}}\hat{x}_{jl\beta}(\varepsilon,z),$$
where $\hat{x}_{jl\beta}(\varepsilon,z)$ is a formal power series satisfying
\begin{equation*}
 \hat{x}_{jl\beta}(\varepsilon,z)=q_{il\beta}(\varepsilon, \partial_{\Gamma_s,z})\hat{f}(\varepsilon,z)
\end{equation*}
with
\begin{equation*}
 q_{jl\beta}(\varepsilon, \zeta)=c_{jl\beta}(\zeta)\sum_{n=0}^{\infty}\frac{n!}{(n+1-\beta)!}\lambda_{jl}^n(\zeta)\varepsilon^n.
\end{equation*}
Here $c_{jl\beta}(\zeta)$ are algebraic functions satisfying
\begin{equation*}
 q_n(\zeta)=\sum_{j=1}^{n_0}\sum_{l=1}^{m_j}\sum_{\beta=1}^{r_{jl}}c_{jl\beta}(\zeta)\frac{n!}{(n+1-\beta)!}\lambda_{jl}^n(\zeta)\quad\text{for}\quad n\in\NN_0
\end{equation*}
for the sequence of polynomials $(q_n(\zeta))_{n\geq 0}$ being the unique solution of the difference equation
\begin{equation*}
 P_0 q_{n}(\zeta)+P_1(\zeta)q_{n-1}(\zeta)+\dots+P_p(\zeta)q_{n-p}(\zeta)=0\quad\text{for}\quad n\geq 1
\end{equation*}
with the initial conditions $q_0(\zeta)\equiv 1/P_0$ and $q_{-1}(\zeta)\equiv\dots\equiv q_{1-p}(\zeta)\equiv 0$.

Observe that by Proposition \ref{prop:2}
\begin{equation*}
\hat{\Bo}_{1,\varepsilon}\hat{x}_{jl\beta}(\varepsilon,z)=q_{jl\beta}(\partial_{\varepsilon}^{-1},\partial_{\Gamma_s,z})\hat{\Bo}_{1,\varepsilon}\hat{f}(\varepsilon,z)\in\EE[[\varepsilon]]_{q_js-1}.
\end{equation*}
Hence formal power series
$$\hat{y}_{jl\beta}(\varepsilon,z):=\hat{\Bo}_{1,\varepsilon}\hat{x}_{jl\beta}(\varepsilon,z),\ j=1,\dots,n_0,\ l=1,\dots,m_j,\ \beta=1,\dots,r_{jl}$$
give the decomposition of the solution
\begin{equation*}
 \hat{y}(\varepsilon,z)=\sum_{j=1}^{n_0}\sum_{l=1}^{m_j}\sum_{\beta=1}^{r_{jl}}\hat{y}_{jl\beta}(\varepsilon,z)
\end{equation*}
of the Cauchy problem (\ref{eq:CPr}) as in \cite[Theorem 1]{mi3}.
If we take the sequence of moments $\tilde{m}=(\tilde{m}(n))_{n\geq 0}=\big(\frac{\Gamma(1+q_jsn)}{n!}\big)_{n\geq 0}$ and $\tilde{m}$-Borel transform we see that $$\hat{\Bo}_{1/(q_js),\varepsilon}\hat{x_{jl\beta}}(\varepsilon,z)=\hat{\Bo}_{\tilde{m},\varepsilon}\hat{y_{jl\beta}}(\varepsilon,z)=:\hat{w}_{jl\beta}(\varepsilon,z)\in\EE[[\varepsilon]]_0.$$
Moreover, by (\ref{eq:cond_c}) we get
$$
 \hat{\Bo}_{1/(q_js),\varepsilon}\hat{f}(\varepsilon,z)=
 \hat{\Bo}_{\tilde{m},\varepsilon}\hat{\Bo}_{1,\varepsilon}\hat{f}(\varepsilon,z)
 \in\Oo^{1/(q_js),s}\Big(\hat{S}_d\times\hat{S}_{(d+2k\pi+\arg a_{jl})/q_j}\Big)
$$
for $l=1,\dots,m_j$, $k=0,\dots,\mu_j-1$.

Hence, as in the previous step (b), using \cite[Theorem 2]{mi3} we conclude that $\hat{x}_{jl\beta}(z,\varepsilon)$ is $1/(q_js)$-summable in a direction $d$ for $j=1,\dots,\tilde{n}$, which means that $\hat{x} (z, \varepsilon)$ is $(1/(q_{\tilde{n}}s),\dots,1/(q_1s))$-multisummable in an admissible multidirection
\linebreak
$(d_{\tilde{n}},\dots,d_{1})\in\RR^{\tilde{n}}$.
\end{proof}

\end{document}